\theoremstyle{plain}
\newtheorem{theorem}{Theorem}[section]
\newtheorem{lemma}[theorem]{Lemma}
\newtheorem{corollary}[theorem]{Corollary}
\newtheorem{proposition}[theorem]{Proposition}
\theoremstyle{definition}
\newtheorem{definition}[theorem]{Definition}
\theoremstyle{remark}
\newtheorem{remark}[theorem]{Remark}
\numberwithin{equation}{section}
\newcommand{\R}{\mathbb{R}}
\newcommand{\Z}{\mathbb{Z}}
\newcommand{\Heis}{\mathcal{H}}
\newcommand{\Heisr}{\mathcal{H}_{\mathbb{R}}}
\newcommand{\cL}{\mathcal{L}}
\newcommand{\longhookrightarrow}{\ensuremath{\lhook\joinrel\longrightarrow}}
\newcommand{\incl}[3][right]%
{%
\draw[<-,>=#1 hook] #2 to ($ #2!0.5!#3 $);
\draw[->] ($ #2!0.5!#3 $) to #3;%
}
\begin{document}

\title[Action of subgroups of the MCG on Heisenberg homologies]{Action of subgroups of the mapping class group on Heisenberg homologies}

\author{Christian Blanchet}
\address{Universit{\'e} Paris Cit{\'e} \& Sorbonne Universit{\'e}, CNRS, IMJ-PRG, F-75006 Paris, France}
\curraddr{}
\email{christian.blanchet@imj-prg.fr}
\thanks{The first and third authors are thankful for the support of the Abdus Salam School of Mathematical Sciences.}

\author{Martin Palmer}
\address{Simion Stoilow Mathematical Institute of the Romanian Academy, Bucharest, Romania}
\curraddr{}
\email{mpanghel@imar.ro}
\thanks{The second author was partially supported by a grant of the Romanian Ministry of Education and Research, CNCS - UEFISCDI, project number PN-III-P4-ID-PCE-2020-2798, within PNCDI III}

\author{Awais Shaukat}
\address{Abdus Salam School of Mathematical Sciences, Lahore, Pakistan}
\curraddr{}
\email{m.awais\_shaukat@live.com}
\thanks{}

\subjclass[2020]{Primary 57K20, 55R80, 55N25, 20C12, 19C09}

\date{16 October 2023}

\begin{abstract}
In previous work we constructed twisted representations of mapping class groups of surfaces, depending on a choice of representation $V$ of the Heisenberg group $\Heis$. For certain $V$ we were able to untwist these mapping class group representations. Here, we study the restrictions of our twisted representations to different subgroups of the mapping class group. In particular, we prove that these representations may be untwisted on the Torelli group for any given representation $V$ of $\Heis$. When $V$ is the Schrödinger representation, we also construct untwisted representations of subgroups defined as kernels of crossed homomorphisms studied by Earle and Morita.
\end{abstract}

\maketitle

\section*{Introduction}

In recent work \cite{HeisenbergHomology}, we constructed a twisted action of the mapping class group of any compact, connected, oriented surface $\Sigma$ with one boundary component on the homology of configuration spaces with local coefficients determined by a representation $V$ of the discrete Heisenberg group $\Heis = \Heis(\Sigma)$. The details of this construction are recalled briefly in \S\ref{s:twisted}. For specific representations $V$ of $\Heis$ we were able to untwist and obtain genuine, untwisted linear representations of the mapping class group (for the linearisation $\Heis \oplus \Z$ of the affine translation action of $\Heis$ on itself) or linear representations of central extensions of the mapping class group (for the Schrödinger representation of $\Heis$).

Our goal here is to complete the study of this action on Heisenberg homology.
In \S\ref{s:Chillingworth} we identify the kernel of the action of the mapping class group on the Heisenberg group as the Chillingworth subgroup (Proposition \ref{kernel_of_Psi}); hence we obtain a linear representation of this subgroup for any representation $V$ of $\Heis$ (Theorem \ref{thm:Chill}).
We also identify the \emph{projective kernel} of this action (i.e.\ the subgroup of elements that act by inner automorphisms) with the Torelli group (Proposition \ref{Psi_inner}) and use this fact in \S\ref{s:Torelli} to obtain untwisted linear representations of the Torelli group for any $V$ (Theorem \ref{thm:Torelli}). In the special case where $V$ is the Schr{\"o}dinger representation -- where in \cite{HeisenbergHomology} we obtained an action of the stably universal central extension of the mapping class group -- we show in \S\ref{s:Morita} that, restricting to a so-called \emph{Earle-Morita subgroup} defined in Definition \ref{def:Earle-Morita}, we obtain linear representations without passing to any extension (Theorem \ref{thm:Morita}).

Using the local system given by the Schr{\"o}dinger representation at an odd root of $1$, De Renzi and Martel \cite{DeRenziMartel2022} produced a homological model for TQFT representations derived from quantum $sl(2)$. Our results shed light on a few points in their paper. First, Proposition \ref{kernel_of_Psi} identifies a certain subgroup of the mapping class group denoted by $\mathscr{M}_g^{\mathbb{H}}$ in \cite[Proposition 2.21]{DeRenziMartel2022} with the Chillingworth subgroup. Second, the construction of \cite[\S 6.2]{DeRenziMartel2022} depends on the identification (Proposition \ref{Psi_inner}) of the projective kernel of the action of the mapping class group on $\Heis$ with the Torelli group.

\section{Twisted representations of the mapping class group}
\label{s:twisted}

\subsection{A review of Heisenberg homology}
\label{subsec:review}

Let $\Sigma=\Sigma_{g,1}$, for $g\geq 1$, be a compact, connected, oriented surface of genus $g$ with one boundary component. For $n\geq 2$, the unordered configuration space of $n$ points in $\Sigma_{g,1}$ is
\[
\mathcal{C}_{n}(\Sigma_{g,1} )= \{ \{c_{1},\dots,c_{n}\} \subset \Sigma_{g,1} \mid c_i\neq c_j \text{ for $i\neq j$}\}.
\]
The surface braid group is then defined as $\mathbb{B}_{n}(\Sigma)=\pi_{1}(\mathcal{C}_{n}(\Sigma),*)$. 
A presentation for this group was first obtained by G.~P.~Scott \cite{Scott} and subsequently revisited by Gonz\'alez-Meneses \cite{Gonzalez} and Bellingeri \cite{Bellingeri}.
We fix a collection of based loops, $\alpha_1,\ldots,\alpha_g,\beta_1,\ldots,\beta_g$, as depicted in Figure \ref{modelSurf}.
The base point $*_1$ belongs to the base configuration $*$. We will use the same notation $\alpha_r$, $\beta_s$ for the corresponding braids where only the first point is moving.

\begin{figure}[h]
\centering
\includegraphics[scale=0.65]{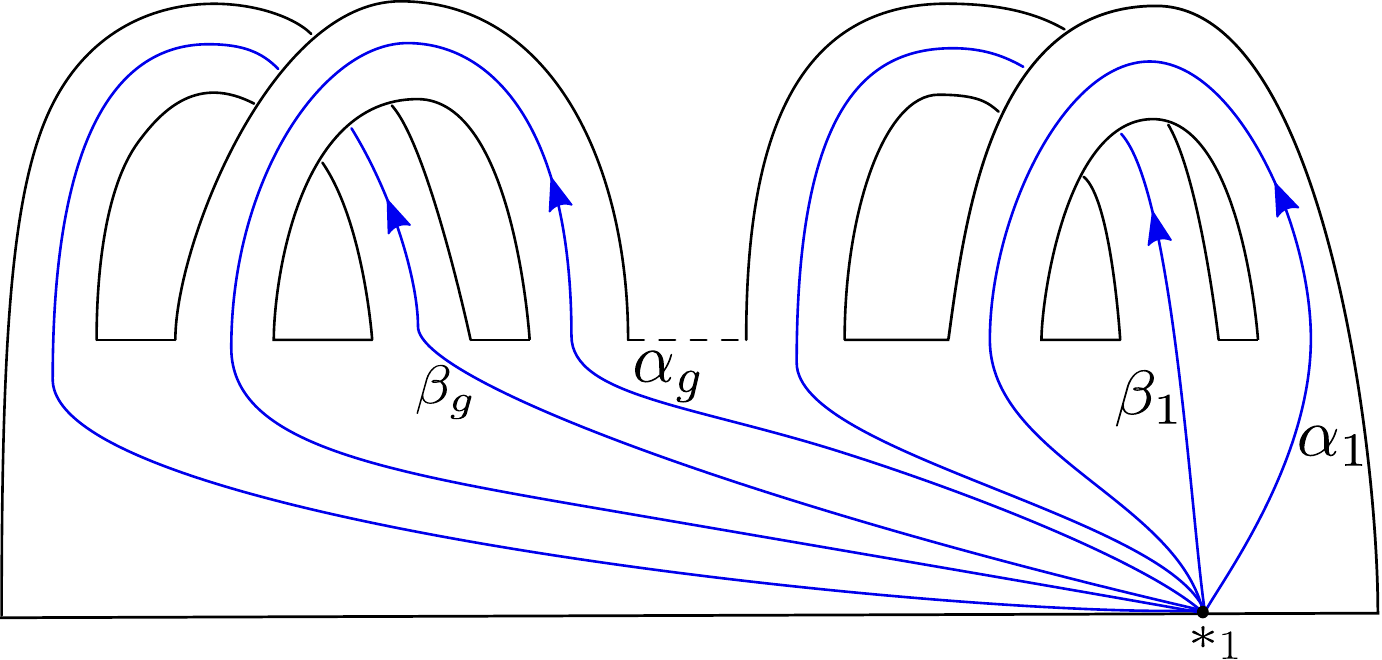}
\caption{Model for $\Sigma$.}
\label{modelSurf}
\end{figure}

The braid group $\mathbb{B}_{n}(\Sigma)$ has generators $\alpha_1,\ldots,\alpha_g$, $\beta_1,\ldots,\beta_g$, together with the classical generators $\sigma_1,\ldots,\sigma_{n-1}$,  and relations:
\begin{equation}
\label{eq:relations}
\begin{cases}
\,\text{(\textbf{BR1}) }\, [\sigma_{i},\sigma_{j}] = 1 & \text{for } \lvert i-j \rvert \geq 2, \\
\,\text{(\textbf{BR2}) }\, \sigma_{i}\sigma_{j}\sigma_{i}=\sigma_{j}\sigma_{i}\sigma_{j} & \text{for } \lvert i-j \rvert = 1, \\
\,\text{(\textbf{CR1}) }\, [\zeta,\sigma_{i}] = 1 & \text{for } i>1 \text{ and all } \zeta\text{ among the }\alpha_r,\beta_s,\\
\,\text{(\textbf{CR2}) }\, [\zeta,\sigma_{1}\zeta\sigma_{1}] = 1 & \text{for all } \zeta\text{ among the }\alpha_r,\beta_s, \\
\,\text{(\textbf{CR3}) }\, [\zeta,\sigma^{-1}_{1}\eta\sigma_{1}] = 1& \text{for all } \zeta\neq \eta \text{ among the }\alpha_r,\beta_s,\text{ with}\\
& \{\zeta,\eta\}\neq \{\alpha_r,\beta_r\},\\
\,\text{(\textbf{SCR}) }\, \sigma_{1}\beta_{r}\sigma_{1}\alpha_{r}\sigma_{1}=\alpha_{r}\sigma_{1}\beta_{r} & \text{for all } r.
\end{cases}
\end{equation}
Composition of loops is written from right to left.

We will use the notation $x.y$ for the standard intersection form on $H_1(\Sigma;\Z)$. The \emph{Heisenberg group} $\Heis(\Sigma)$ is the central extension of the homology group $H_1(\Sigma;\Z)$ defined using the $2$-cocycle $(x,y)\mapsto x.y$. As a set, it is equal to $\Z \times H_1(\Sigma;\Z)$, and the operation is given by
\begin{equation}
\label{eq:Heisenberg-product}
(k,x)(l,y)=(k+l+\,x.y,x+y).
\end{equation}

We will often denote the Heisenberg group simply by $\Heis = \Heis(\Sigma)$ when the surface $\Sigma$ under consideration is clear. We will use the notation $a_r$, $b_s$, for the homology classes of $\alpha_r$, $\beta_s$. From the presentation \eqref{eq:relations} we deduced the following in \cite[\S 1]{HeisenbergHomology}.
\begin{proposition}
\label{hom_phi}
For each $g\geq 1$ and $n\geq  2$, the quotient of the braid group $\mathbb{B}_n(\Sigma)$ by the subgroup  $[\sigma_1,\mathbb{B}_n(\Sigma)]^N$ normally generated by the commutators $[\sigma_1,x]$, $x\in \mathbb{B}_n(\Sigma)$, is isomorphic to the Heisenberg group $\Heis(\Sigma)$. An isomorphism
\begin{equation}
\label{eq:Heisenberg-quotient-isomorphism}
\mathbb{B}_n(\Sigma)/[\sigma_1,\mathbb{B}_n(\Sigma)]^N \cong \Heis(\Sigma)
\end{equation}
is represented by the surjective homomorphism
\[
\phi \colon \mathbb{B}_{n}(\Sigma) \relbar\joinrel\rightarrow \Heis(\Sigma)
\]
sending each $\sigma_i$ to $u=(1,0)$, $\alpha_r$ to $\tilde{a}_r=(0,a_r)$ and $\beta_s$ to $\tilde{b}_s=(0,b_s)$.
\end{proposition}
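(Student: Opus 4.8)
The plan is to exhibit $\phi$ as a well-defined surjection whose kernel is precisely the normal subgroup $[\sigma_1,\mathbb{B}_n(\Sigma)]^N$, so that it descends to an isomorphism on the quotient. First I would check that the assignment $\sigma_i\mapsto u=(1,0)$, $\alpha_r\mapsto\tilde a_r=(0,a_r)$, $\beta_s\mapsto\tilde b_s=(0,b_s)$ respects each family of relations in the presentation \eqref{eq:relations}. Since $u$ is central in $\Heis$, the braid relations (BR1), (BR2) and the commutation relations (CR1), (CR2) are sent to trivial identities immediately; for (CR3) and (SCR) the verification reduces to the two computations $[(0,x),(0,y)]=(2\,x.y,0)$ and $\tilde a_r\tilde b_r=u^2\,\tilde b_r\tilde a_r$ in $\Heis$, using that $x.y=0$ for the pairs occurring in (CR3) and $a_r.b_r=1$ for (SCR). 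This makes $\phi$ a homomorphism, and it is surjective because $u$, the $\tilde a_r$ and the $\tilde b_s$ generate $\Heis$.

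Next, because $\phi(\sigma_1)=u$ is central, every commutator $[\sigma_1,x]$ lies in $\ker\phi$, so $[\sigma_1,\mathbb{B}_n(\Sigma)]^N\subseteq\ker\phi$ and $\phi$ factors through a surjection $\bar\phi$ from the quotient $Q=\mathbb{B}_n(\Sigma)/[\sigma_1,\mathbb{B}_n(\Sigma)]^N$ onto $\Heis$. The crux is to prove $\bar\phi$ is injective, which I would do by computing a presentation of $Q$ directly. In $Q$ the image of $\sigma_1$ is central by construction; substituting this into (BR2) with indices $1,2$ yields $\sigma_1^2\sigma_2=\sigma_1\sigma_2^2$, whence $\sigma_1=\sigma_2$, and an induction on the index then shows that all $\sigma_i$ coincide with $\sigma_1$ and are therefore central in $Q$.

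With all $\sigma_i$ identified to a single central element, the remaining relations simplify drastically: (CR1) and (CR2) become vacuous, (CR3) reduces to $[\zeta,\eta]=1$ for the stated pairs, and (SCR) reduces to $[\alpha_r,\beta_r]=\sigma_1^{2}$. Thus $Q$ is generated by $\alpha_1,\dots,\alpha_g,\beta_1,\dots,\beta_g$ together with the central element $\sigma_1$, subject only to $[\alpha_r,\beta_r]=\sigma_1^2$ and the vanishing of all remaining commutators among the $\alpha$'s and $\beta$'s. I would finish by observing that this is exactly a presentation of $\Heis$: every element of $Q$ can be put in the normal form $\sigma_1^{k}\prod_r\alpha_r^{m_r}\prod_r\beta_r^{n_r}$, and $\bar\phi$ carries these bijectively onto the elements $(k,\sum_r m_r a_r+\sum_r n_r b_r)$ of $\Heis$ with matching multiplication. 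The main obstacle is this last injectivity step: one must be careful that collapsing the $\sigma_i$ and reducing the relations really does account for all of $\ker\phi$ and leaves no further identifications, i.e.\ that the simplified presentation is genuinely that of the integral Heisenberg group rather than a proper quotient of it.
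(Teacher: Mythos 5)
Your proposal is correct and follows essentially the same route as the paper, which deduces the result directly from the presentation \eqref{eq:relations} (the detailed verification being carried out in \cite[\S 1]{HeisenbergHomology}): check $\phi$ on relations, collapse the $\sigma_i$ to a single central element in the quotient, reduce the relations to a presentation of $\Heis$, and conclude injectivity via normal forms. One negligible slip: the image of the normal form $\sigma_1^k\prod_r\alpha_r^{m_r}\prod_r\beta_r^{n_r}$ is $(k+\sum_r m_r n_r,\sum_r m_r a_r+\sum_r n_r b_r)$ rather than $(k,\sum_r m_r a_r+\sum_r n_r b_r)$, but the assignment $(k,m,n)\mapsto\phi(\text{normal form})$ is still visibly bijective, so your injectivity argument goes through unchanged.
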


From the homomorphism $\phi$ we obtain a regular covering $\widetilde{\mathcal{C}}_n(\Sigma)$ of the configuration space $\mathcal{C}_n(\Sigma)$. The homology of this covering space is the homology of $\mathcal{C}_n(\Sigma)$ with local coefficients defined by $\phi$, which we call \emph{Heisenberg homology} and denote by $H_*(\mathcal{C}_n(\Sigma),\Z[\Heis])$. It is equipped with a right $\Z[\Heis]$-module structure defined by deck transformations. 
 
Let us denote by $\mathcal{S}_*(\widetilde{\mathcal{C}}_n(\Sigma))$ the singular chain complex of the Heisenberg covering $\widetilde{\mathcal{C}}_n(\Sigma)$; this is a complex of right $\Z[\Heis]$-modules. Given a (left) representation $\rho\colon \Heis \to GL(V)$, the corresponding twisted homology is that of the complex
\begin{equation}
\label{eq:Local}
\mathcal{S}_*(\mathcal{C}_n(\Sigma);V) := \mathcal{S}_*(\widetilde{\mathcal{C}}_n(\Sigma))\otimes_{\Z[\Heis]} V
\end{equation}
This will be called the \emph{Heisenberg homology} of surface configurations with coefficients in $V$.

We also consider the Borel-Moore homology
\begin{equation}\label{eq:BorelMoore}
H_*^{BM}(\mathcal{C}_{n}(\Sigma);V) = 
{\varprojlim_T}\, H_*(\mathcal{C}_{n}(\Sigma), \mathcal{C}_{n}(\Sigma)\setminus T ; V),
\end{equation}
where the inverse limit is taken over all compact subsets  $T\subset\mathcal{C}_{n}(\Sigma)$. 
We denote by $\mathcal{C}_{n}(\Sigma,\partial^-(\Sigma))$ the closed subspace of configurations containing at least one point in a fixed closed interval $\partial^-(\Sigma)\subset \partial \Sigma$. The relative Borel-Moore homology is defined similarly as
\begin{equation}
\label{eq:relativeBorelMoore}
\begin{multlined}
H_*^{BM}(\mathcal{C}_{n}(\Sigma),\mathcal{C}_{n}(\Sigma,\partial^-(\Sigma));V) \\
\qquad\qquad = {\varprojlim_T}\, H_* \bigl( \mathcal{C}_{n}(\Sigma), \mathcal{C}_{n}(\Sigma,\partial^-(\Sigma)) \cup (\mathcal{C}_{n}(\Sigma)\setminus T) ; V \bigr) .
\end{multlined}
\end{equation}
The following theorem computes the relative Borel-Moore homology as a module.

\begin{theorem}[{\cite[\S 2]{HeisenbergHomology}}]
\label{basis}
Let $V$ be any representation of the discrete Heisenberg group $\Heis(\Sigma)$. Then, for $n\geq 2$, there is an isomorphism of modules
\[
H_n^{BM}(\mathcal{C}_{n}(\Sigma),\mathcal{C}_{n}(\Sigma,\partial^-(\Sigma)) ;V) \;\cong\; \bigoplus_{k\in \mathcal{K}} V.
\]
Furthermore, this is the only non-vanishing module in $H_*^{BM}(\mathcal{C}_{n}(\Sigma),\mathcal{C}_{n}(\Sigma,\partial^-(\Sigma)) ;V)$.
\end{theorem}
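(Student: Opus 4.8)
The plan is to compute the relative Borel--Moore homology directly from a cellular (Fox--Neuwirth--Fuks--type) stratification of $\mathcal{C}_n(\Sigma)$ induced by a Morse function on $\Sigma$, and then to show that the resulting relative chain complex is concentrated in degree $n$ with a basis indexed by $\mathcal{K}$. First I would fix a handle decomposition of $\Sigma_{g,1}$ with a single $0$-handle and $2g$ $1$-handles, whose cores $e_1,\dots,e_{2g}$ represent $a_1,b_1,\dots,a_g,b_g$, chosen so that all the descending manifolds exit through the boundary interval $\partial^-(\Sigma)$. This gives a proper Morse(--Bott) function on $\Sigma$ and, by recording for each configuration which handle (or critical locus) every point sits over together with the induced linear order of points sharing a $1$-handle, an associated stratification of $\mathcal{C}_n(\Sigma)$ into open cells. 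Conceptually, Poincar\'e--Lefschetz duality is the reason the degrees work out: although $\mathcal{C}_n(\Sigma)$ is a $2n$-manifold, the half-open condition along $\partial^-(\Sigma)$ forces its relative Borel--Moore homology into low degree.

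Next I would analyse which strata survive the boundary condition. The relative Borel--Moore cellular complex has one generator per critical configuration, sitting in the degree given by the descending dimension of its stratum. A point pinned on a $1$-handle core $e_i$ contributes $1$ to this dimension, whereas a point lying on the $0$-handle or in a free $2$-dimensional region provides a direction along which it can escape to $\partial^-(\Sigma)$; hence every stratum containing a point off the arcs is absorbed into the subspace $\mathcal{C}_n(\Sigma,\partial^-(\Sigma))\cup(\mathcal{C}_n(\Sigma)\setminus T)$ and contributes nothing relative to it. The surviving generators are exactly the configurations with all $n$ points distributed on the $2g$ arcs, with $k_i$ points on $e_i$ and $\sum_i k_i=n$; since the points on a given oriented arc are canonically ordered, these are indexed precisely by the multi-indices $(k_1,\dots,k_{2g})$, i.e.\ by $\mathcal{K}$. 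Each such contractible cell contributes one copy of $V$ through the restriction of the Heisenberg local system $\phi$, so the degree-$n$ chain group is $\bigoplus_{k\in\mathcal{K}} V$.

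It then remains to prove concentration in degree $n$. After imposing the $\partial^-(\Sigma)$ condition, every remaining stratum has descending dimension exactly $n$ (each of the $n$ points is pinned on a $1$-handle core, contributing exactly $1$), so the relative complex is concentrated in a single degree; consequently every generator is automatically a cycle, no nonzero boundaries occur, and $H^{BM}_n \cong \bigoplus_{k\in\mathcal{K}} V$ with vanishing in all other degrees. The cleanest way to make this rigorous is an induction on $n$ using the Fadell--Neuwirth fibration $\mathcal{C}^{\mathrm{ord}}_n(\mathrm{int}(\Sigma)) \to \mathcal{C}^{\mathrm{ord}}_{n-1}(\mathrm{int}(\Sigma))$, with fibre $\mathrm{int}(\Sigma)$ minus $n-1$ points, transported to the unordered, relative Borel--Moore setting, with base case $n=1$: the computation $H^{BM}_1(\Sigma,\Sigma\cap\partial^-(\Sigma);V)\cong\bigoplus_{i=1}^{2g} V$, realised by the $2g$ properly embedded half-open arcs $e_i$ (and nothing in degree $0$).

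I expect the main obstacle to be exactly this concentration statement combined with the bookkeeping of the nontrivial local system. One must verify that the $\partial^-(\Sigma)$ condition cuts the Fox--Neuwirth complex down to degree $n$ precisely --- this is where the non-compactness and the specific choice of $\partial^-(\Sigma)$ are essential --- and that the twisting by $\phi$, which records how the central element $u=(1,0)$ and the generators $\tilde a_r,\tilde b_s$ act as points slide along and between the arcs, introduces no unexpected cancellation or identification among the summands $\bigoplus_{k\in\mathcal{K}} V$. The topological input (stratification, duality, and the dimension count) is comparatively standard; the delicate part is the equivariance and local-coefficient accounting in the inductive step.
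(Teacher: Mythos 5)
Note first that this paper does not actually prove Theorem \ref{basis}: it is quoted verbatim from \cite[\S 2]{HeisenbergHomology}, so there is no in-paper argument to measure you against. Judged against the proof in that reference, your overall strategy is essentially the same one: fix a handle decomposition of $\Sigma_{g,1}$ with one $0$-handle and $2g$ one-handles, push every configuration point that does not lie over the $1$-handles down into $\partial^-(\Sigma)$, and observe that what survives is indexed by the distributions $(k_1,\dots,k_{2g})$ with $\sum_i k_i=n$, each contributing a half-open cell of dimension exactly $n$ (an ordered $k_i$-tuple on an arc is an open $k_i$-simplex), whence concentration in degree $n$ and a free basis indexed by $\mathcal{K}$. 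Two small points of hygiene: the worry you raise at the end about ``unexpected cancellation'' from the local system is a non-issue once the relative complex sits in a single degree, since there are then no differentials at all; and for a \emph{general} representation $V$ you should first establish the result for $V=\Z[\Heis]$ at the chain level --- the relative Borel--Moore complex is chain homotopy equivalent to a free $\Z[\Heis]$-module concentrated in degree $n$ --- and then apply $-\otimes_{\Z[\Heis]}V$, rather than arguing cell by cell with $V$-coefficients; this is how the cited source handles arbitrary $V$ uniformly.

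The one step I would not let pass as written is your proposed rigorous completion via the Fadell--Neuwirth fibration. That fibration lives on \emph{ordered} configuration spaces of the open surface, it is not proper, and it interacts badly with both Borel--Moore homology (there is no Leray--Serre spectral sequence for $H_*^{BM}$ of a non-proper map without extra hypotheses, and the inverse limit over compacta in \eqref{eq:relativeBorelMoore} does not commute with the fibration) and with the relative condition along $\mathcal{C}_n(\Sigma,\partial^-(\Sigma))$, which is not fibrewise. The mechanism that actually makes the degree count rigorous is different: a \emph{proper} deformation retraction of the pair $(\mathcal{C}_n(\Sigma),\mathcal{C}_n(\Sigma,\partial^-(\Sigma)))$ onto configurations supported on the $1$-handles, followed by a decomposition of the resulting space as a disjoint union over $\mathcal{K}$ of products of configuration spaces of half-open intervals, and a K\"unneth argument for relative Borel--Moore homology; the base computation is $H_*^{BM}$ of $k$ points on an interval rel one endpoint, which is $\Z$ concentrated in degree $k$. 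If you replace your inductive step by this retraction-plus-K\"unneth argument, your proof is correct and coincides in substance with the one in \cite[\S 2]{HeisenbergHomology}.
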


There is also an explicit geometric description of the indexing set $\mathcal{K}$ of the direct sum decomposition of Theorem~\ref{basis}, which is explained in \cite[\S 2]{HeisenbergHomology}; however, it will not be essential for the present paper.

\subsection{An action of the mapping class group on the Heisenberg group}

The \emph{mapping class group} of $\Sigma$, denoted by $\mathfrak{M}(\Sigma)$, is the group of orientation-preserving diffeomorphisms of $\Sigma$ fixing the boundary pointwise, modulo isotopies relative to the boundary. The isotopy class of a diffeomorphism $f$ is denoted by $[f]$. An orientation-preserving self-diffeomorphism $f \colon \Sigma \rightarrow \Sigma$ fixing the boundary pointwise gives a homeomorphism $\mathcal{C}_{n}(f) \colon \mathcal{C}_{n}(\Sigma) \rightarrow \mathcal{C}_{n}(\Sigma)$, defined by $\{x_{1},x_{2},\ldots ,x_{n}\} \mapsto \{f(x_{1}),f(x_{2}),\ldots ,f(x_{n})\}$. If we ensure that the basepoint configuration of $\mathcal{C}_n(\Sigma)$ is contained in $\partial\Sigma$, then it is fixed by $\mathcal{C}_n(f)$ and this in turn induces an automorphism $f_{\mathbb{B}_{n}(\Sigma)} = \pi_{1}(\mathcal{C}_{n}(f)) \colon \mathbb{B}_{n}(\Sigma) \rightarrow \mathbb{B}_{n}(\Sigma)$, which depends only on the isotopy class $[f]$ of $f$. In \cite[\S 3]{HeisenbergHomology} we proved:

\begin{proposition}
\label{f_Heisenberg}
There exists a unique automorphism $f_{\Heis} \colon \Heis \rightarrow \Heis$ such that the following square commutes:
\begin{equation}
\label{eq:projection-equivariance}
  \begin{tikzcd}
     \mathbb{B}_{n}(\Sigma) \arrow[d,swap, "\phi"] \arrow[rr, "f_{\mathbb{B}_{n}(\Sigma)}"] && \mathbb{B}_{n}(\Sigma) \arrow[d,"\phi"] \\
     \Heis \arrow[rr, "f_{\Heis}"] && \Heis
  \end{tikzcd}
\end{equation}
Thus, there is an action of $\mathfrak{M}(\Sigma)$ on the Heisenberg group $\Heis$ given by
\begin{equation}\label{eq:action_on_Heis}
\Psi \colon f \mapsto f_\Heis \colon \mathfrak{M}(\Sigma) \longrightarrow \mathrm{Aut}(\Heis).
\end{equation}
\end{proposition}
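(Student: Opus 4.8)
The plan is to exploit that $\phi$ realises $\Heis$ as the quotient $\mathbb{B}_{n}(\Sigma)/K$ with $K = \ker\phi = [\sigma_1,\mathbb{B}_{n}(\Sigma)]^N$, and to descend the automorphism $f_{\mathbb{B}_{n}(\Sigma)}$ along this quotient. \emph{Uniqueness} is immediate: since $\phi$ is surjective, any $f_{\Heis}$ satisfying $f_{\Heis}\circ\phi = \phi\circ f_{\mathbb{B}_{n}(\Sigma)}$ is determined on all of $\Heis$. For \emph{existence}, by the universal property of quotient groups it suffices to show $f_{\mathbb{B}_{n}(\Sigma)}(K)\subseteq K$; the commuting square and the homomorphism property of $f_{\Heis}$ then follow automatically.

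First I would reduce the containment $f_{\mathbb{B}_{n}(\Sigma)}(K)\subseteq K$ to a single statement about $\sigma_1$. Since $K$ is normally generated by the commutators $[\sigma_1,x]$, since $f_{\mathbb{B}_{n}(\Sigma)}$ is a homomorphism, and since $K$ is normal, it is enough to check that each $f_{\mathbb{B}_{n}(\Sigma)}([\sigma_1,x]) = [f_{\mathbb{B}_{n}(\Sigma)}(\sigma_1),f_{\mathbb{B}_{n}(\Sigma)}(x)]$ lies in $K = \ker\phi$. Applying $\phi$, this commutator maps to $[\phi(f_{\mathbb{B}_{n}(\Sigma)}(\sigma_1)),\phi(f_{\mathbb{B}_{n}(\Sigma)}(x))]$, which vanishes as soon as $\phi(f_{\mathbb{B}_{n}(\Sigma)}(\sigma_1))$ is central in $\Heis$. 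As the intersection form is nondegenerate on $H_1(\Sigma;\Z)$, the centre of $\Heis$ is exactly $\langle u\rangle = \Z\times\{0\}$, so the whole existence statement comes down to showing $\phi(f_{\mathbb{B}_{n}(\Sigma)}(\sigma_1))\in Z(\Heis)$.

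The key geometric step, which I expect to be the main obstacle, is to prove that $f_{\mathbb{B}_{n}(\Sigma)}(\sigma_1)$ is conjugate to $\sigma_1$ in $\mathbb{B}_{n}(\Sigma)$. The generator $\sigma_1$ is represented by a half-twist of two configuration points inside a small embedded disk $D\subset\Sigma$ disjoint from the remaining basepoints; applying $\mathcal{C}_n(f)$ pointwise, $f_{\mathbb{B}_{n}(\Sigma)}(\sigma_1)$ is the analogous half-twist inside the embedded disk $f(D)$. Any two such local half-twists are conjugate, because the space of embedded disks in the connected surface $\Sigma$ carrying the two chosen points and avoiding the others is path-connected (homogeneity of surfaces); a path of disks from $f(D)$ back to $D$ should upgrade to a loop of configurations $w\in\mathbb{B}_{n}(\Sigma)$ with $w\,\sigma_1\,w^{-1} = f_{\mathbb{B}_{n}(\Sigma)}(\sigma_1)$. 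Making this isotopy argument precise — in particular controlling the basepoint configuration throughout and keeping the disks away from the other strands — is the delicate part. Granting it, $\phi(f_{\mathbb{B}_{n}(\Sigma)}(\sigma_1)) = \phi(w)\,u\,\phi(w)^{-1} = u$ since $u$ is central, which is exactly what is needed.

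Finally I would assemble the conclusion. The containment $f_{\mathbb{B}_{n}(\Sigma)}(K)\subseteq K$ yields the induced homomorphism $f_{\Heis}$; applying the same argument to $f^{-1}$ and using the functoriality $(f^{-1})_{\mathbb{B}_{n}(\Sigma)} = (f_{\mathbb{B}_{n}(\Sigma)})^{-1}$ gives $f_{\mathbb{B}_{n}(\Sigma)}(K)=K$, so that $f_{\Heis}$ is an automorphism. Since $f\mapsto f_{\mathbb{B}_{n}(\Sigma)}$ is itself an action of $\mathfrak{M}(\Sigma)$ and $\phi$ intertwines it with $f\mapsto f_{\Heis}$, uniqueness forces $(fg)_{\Heis}=f_{\Heis}\,g_{\Heis}$ and $\id_{\Heis}=\id$, so $\Psi\colon f\mapsto f_{\Heis}$ is a well-defined homomorphism $\mathfrak{M}(\Sigma)\to\mathrm{Aut}(\Heis)$, as claimed.
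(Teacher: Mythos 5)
Your proposal is correct and follows essentially the same route as the proof this paper cites from \cite{HeisenbergHomology}: reduce existence to showing that $f_{\mathbb{B}_{n}(\Sigma)}$ preserves $\ker\phi = [\sigma_1,\mathbb{B}_n(\Sigma)]^N$, observe that this comes down to $\phi(f_{\mathbb{B}_{n}(\Sigma)}(\sigma_1))$ being central, and establish that by showing $f_{\mathbb{B}_{n}(\Sigma)}(\sigma_1)$ is conjugate to $\sigma_1$ via the geometric half-twist picture. One small precision on the step you rightly flag as delicate: the space of embedded disks containing the two points \emph{pinned} at $*_1,*_2$ is not path-connected (its components are indexed by isotopy classes of arcs from $*_1$ to $*_2$), so the connectivity you actually need is that of the space of configurations equipped with an embedded disk enclosing exactly two of the (moving) points; a path in that space returning to the base configuration is precisely the loop $w$ conjugating one half-twist to the other, which is consistent with the rest of your argument.
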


\subsection{Twisted representations of the mapping class group.}
\label{subsec:twisted-representations}

In \cite{HeisenbergHomology} we explained how to use the action \eqref{eq:action_on_Heis} to obtain twisted representations of $\mathfrak{M}(\Sigma)$ for each representation $V$ of $\Heis$ over a ring $R$ and each integer $n\geq 2$. We recall this briefly here.

For a (left) representation $\rho \colon \Heis \to \mathrm{Aut}_R(V)$ and an automorphism $\tau \in \mathrm{Aut}(\Heis)$, the $\tau$-twisted representation $\rho \circ \tau$ is denoted by ${}_\tau \! V$. Also, for any representation $V$ of $\Heis$, we denote the induced local system
\[
\Z[\widetilde{\mathcal{C}}_n(\Sigma)] \otimes_{\Z[\Heis]} V
\]
on the configuration space $\mathcal{C}_n(\Sigma)$ simply by $V$, by abuse of notation. We then write
\begin{equation}
\label{eq:Borel-Moore-homology-group}
\mathcal{V}_n(V) = H_n^{BM} (\mathcal{C}_n(\Sigma) , \mathcal{C}_n(\Sigma,\partial^-(\Sigma)) ; V)
\end{equation}
for the relative Borel-Moore homology with coefficients in this local system. In this notation, \cite[\S 4.1]{HeisenbergHomology} explains that each mapping class $f \in \mathfrak{M}(\Sigma)$ induces an automorphism of $\mathcal{C}_n(\Sigma)$ covered by an isomorphism
\[
{}_{\tau \circ f_\Heis} \! V \longrightarrow {}_{\tau} \! V
\]
of local systems for each $\tau \in \mathrm{Aut}(\Heis)$. Taking relative Borel-Moore homology, we therefore obtain isomorphisms
\begin{equation}
\label{eq:action-of-f}
\mathcal{V}_n \bigl( {}_{\tau \circ f_\Heis} \! V \bigr) \longrightarrow \mathcal{V}_n \bigl( {}_{\tau} \! V \bigr)
\end{equation}
of $R$-modules. This may be described succinctly as a representation of the \emph{action groupoid} associated to the action \eqref{eq:action_on_Heis} of $\mathfrak{M}(\Sigma)$ on $\Heis$.

\begin{definition}
\label{def:action-groupoid}
For a group $G$ with a left action $a \colon G \to \mathrm{Sym}(X)$ on a set $X$, the \emph{action groupoid} $\mathrm{Ac}(G \curvearrowright X)$ is the groupoid whose set of objects is $a(G)$, whose set of morphisms $\sigma \to \tau$ is the subset $a^{-1}(\tau^{-1}\sigma) \subseteq G$ and whose composition is given by multiplication in $G$.
\end{definition}

\begin{theorem}[{\cite[Theorem~A(b)]{HeisenbergHomology}}]
\label{thm:twisted-representation}
Associated to any representation $V$ of $\Heis$ over $R$ and any integer $n\geq 2$, there is a functor 
\begin{equation}
\label{eq:twisted-representation}
\mathrm{Ac}(\mathfrak{M}(\Sigma) \curvearrowright \Heis) \longrightarrow \mathrm{Mod}_R
\end{equation}
sending each object $\tau \colon \Heis \to \Heis$ to the $R$-module $\mathcal{V}_n({}_{\tau} \! V)$ and sending each morphism $f \colon \tau \circ f_\Heis \to \tau$ to the $R$-linear isomorphism \eqref{eq:action-of-f}.
\end{theorem}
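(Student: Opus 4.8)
The plan is to realise the claimed functor, call it $F$, explicitly on the Heisenberg covering, and then verify the two functor axioms; the whole point will be that the lifts of the self-maps $\Confg_n(f)$ to $\widetilde{\Confg}_n(\Sigma)$ can be normalised so as to compose \emph{strictly}, not merely up to a deck transformation. First I would fix once and for all a point $\tilde *$ in the fibre of $\widetilde{\Confg}_n(\Sigma)\to\Confg_n(\Sigma)$ over the base configuration $*$. Since $*\subset\partial\Sigma$ and any representative diffeomorphism $f$ fixes $\partial\Sigma$ pointwise, the homeomorphism $\Confg_n(f)$ fixes $*$, so there is a unique lift $\tilde f\colon\widetilde{\Confg}_n(\Sigma)\to\widetilde{\Confg}_n(\Sigma)$ with $\tilde f(\tilde *)=\tilde *$. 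Because the covering is regular with deck group $\Heis$ and $\phi\circ f_{\mathbb{B}_n(\Sigma)}=f_\Heis\circ\phi$ (the commuting square of Proposition~\ref{f_Heisenberg}), a path-lifting argument shows that $\tilde f$ is twisted-equivariant for the right deck action, namely
\[
\tilde f(x\cdot h)=\tilde f(x)\cdot f_\Heis(h),\qquad x\in\widetilde{\Confg}_n(\Sigma),\ h\in\Heis .
\]

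Next I would read off the isomorphism \eqref{eq:action-of-f} from this lift. The induced chain map satisfies $\tilde f_*(c\cdot h)=\tilde f_*(c)\cdot f_\Heis(h)$, so $c\otimes v\mapsto\tilde f_*(c)\otimes v$ descends to a well-defined map $\mathcal{S}_*(\widetilde{\Confg}_n(\Sigma))\otimes_{\Z[\Heis]}{}_{\tau\circ f_\Heis}\!V\to\mathcal{S}_*(\widetilde{\Confg}_n(\Sigma))\otimes_{\Z[\Heis]}{}_{\tau}\!V$, the source and target tensor relations being matched by $\tau(f_\Heis(h))=(\tau\circ f_\Heis)(h)$. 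As $f$ and $\tilde f$ are homeomorphisms they are proper, and since $f$ fixes $\partial\Sigma$ they preserve $\Confg_n(\Sigma,\partial^-(\Sigma))$; passing to relative Borel-Moore homology therefore returns exactly \eqref{eq:action-of-f}, $\mathcal{V}_n({}_{\tau\circ f_\Heis}\!V)\to\mathcal{V}_n({}_{\tau}\!V)$. On objects the assignment $\tau\mapsto\mathcal{V}_n({}_{\tau}\!V)$ is a well-defined $R$-module (computed by Theorem~\ref{basis}), which sets up $F$.

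It then remains to check functoriality. For the identity class $e$ the normalised lift is $\id$, so $F(e)$ is the identity of $\mathcal{V}_n({}_{\tau}\!V)$; and $e$ is the identity morphism of the object $\tau$ because $e_\Heis=\id$. For composition, take morphisms $g\colon\rho\to\sigma$ and $f\colon\sigma\to\tau$, so that $\sigma=\tau\circ f_\Heis$, $\rho=\sigma\circ g_\Heis$, and the groupoid composite is the product $fg$ with $(fg)_\Heis=f_\Heis\circ g_\Heis$ by Proposition~\ref{f_Heisenberg}. The crux is the strict identity of normalised lifts $\widetilde{fg}=\tilde f\circ\tilde g$: indeed $\tilde f\circ\tilde g$ lifts $\Confg_n(f)\circ\Confg_n(g)=\Confg_n(fg)$ and fixes $\tilde *$ (as $\tilde f(\tilde g(\tilde *))=\tilde f(\tilde *)=\tilde *$), while the cover is connected since $\phi$ is surjective, so a lift is determined by its value at one point. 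On chains this gives $\tilde f_*\circ\tilde g_*=\widetilde{fg}_*$, hence $F(f)\circ F(g)=F(fg)$.

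I expect the one genuine obstacle to be this last step — forcing the lifts to compose on the nose rather than up to a deck transformation — which is precisely what the boundary basepoint convention secures; without a basepoint fixed by every $\Confg_n(f)$ one would obtain only a projective action of the groupoid. The remaining points (twisted equivariance, well-definedness over $\Z[\Heis]$, and naturality of the inverse limit defining Borel-Moore homology with respect to these proper maps of pairs) are routine once the normalisation is fixed.
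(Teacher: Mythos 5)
Your argument is correct and follows essentially the same route as the construction the paper recalls in \S\ref{subsec:twisted-representations} (the theorem itself is imported from \cite[Theorem~A(b)]{HeisenbergHomology}, so no independent proof appears here): one lifts $\mathcal{C}_n(f)$ to the Heisenberg cover using the boundary basepoint and the commuting square of Proposition \ref{f_Heisenberg}, observes the twisted equivariance $\tilde f(x\cdot h)=\tilde f(x)\cdot f_\Heis(h)$, descends to the tensor product defining the local coefficients, and passes to relative Borel--Moore homology. Your normalisation of the lifts at $\tilde *$ to get strict (rather than projective) functoriality is exactly the point secured by the convention that the base configuration lies in $\partial\Sigma$, and your verification is complete.
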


\begin{remark}
\label{rmk:untwisting}
The basic strategy to upgrade this to an \emph{untwisted} representation is to try to construct coefficient isomorphisms $V \cong {}_{f_\Heis} \! V$ for each $f$. Given this, one may then pre-compose \eqref{eq:action-of-f} with the induced isomorphism of twisted homology groups to obtain \emph{automorphisms} of $\mathcal{V}_n(V)$. We explained how to do this in \cite[\S 4.2]{HeisenbergHomology} when $V$ is the linearisation $L = \Heis \oplus \Z$ of the (affine) translation action of $\Heis$ on itself. We also explained in \cite[\S 5]{HeisenbergHomology} how to do this -- after passing to a certain central extension of $\mathfrak{M}(\Sigma)$ -- when $V$ is the Schrödinger representation of $\Heis$ (this is recalled briefly in \S\ref{subsec:untwisting-Schroedinger}). The goal of the present paper is to explain how to untwist on the Torelli group $\mathfrak{T}(\Sigma) \subset \mathfrak{M}(\Sigma)$ for \emph{any} representation $V$ of $\Heis$, as well as how to untwist on Earle-Morita subgroups of $\mathfrak{M}(\Sigma)$ when $V$ is the Schrödinger representation of $\Heis$ (without passing to any central extension).
\end{remark}

\section{Action on the Heisenberg group}
\label{s:Chillingworth}

The goal of this section is to study the action \eqref{eq:action_on_Heis} of the mapping class group $\mathfrak{M}(\Sigma)$ on the Heisenberg group $\Heis = \Heis(\Sigma)$, as well as the crossed homomorphism naturally associated to this action.

\subsection{Automorphisms of the Heisenberg group.}
\label{subsec:automorphisms-of-Heis}

As a first step, we recall a semi-direct product decomposition of an index-$2$ subgroup of the automorphism group $\mathrm{Aut}(\Heis)$.

Since the element $u=(1,0)$ of $\Heis$ generates its centre, which is infinite cyclic, any automorphism of $\Heis$ must send it either to itself or its inverse. We denote the group of automorphisms of $\Heis$ that fix $u$ by $\mathrm{Aut}^+(\Heis)$.
The structure of this group was studied in \cite[\S 3]{HeisenbergHomology}. There is a natural homomorphism $\cL \colon \mathrm{Aut}^+(\Heis) \to Sp(H)$, where we write $H = H_1(\Sigma;\Z)$, and a split short exact sequence
\begin{equation}
\label{eq:Aut-Heis-ses}
\begin{tikzcd}
1 \ar[r] & H^1(\Sigma;\Z) \ar[r,"j"] & \mathrm{Aut}^+(\Heis) \ar[r,"\cL"] & Sp(H) \ar[r] & 1
\end{tikzcd}
\end{equation}
where $j(c)=[(k,x)\mapsto (k+c(x),x)]$.
The splitting gives a decomposition $\mathrm{Aut}^+(\Heis) \cong Sp(H) \ltimes H^1(\Sigma;\Z)$, where the semi-direct product structure on the right-hand side is induced by the natural action of $Sp(H)$ on $\mathrm{Hom}(H,\Z) \cong H^1(\Sigma;\Z)$.
The projection onto the right-hand factor of this decomposition is a function $(-)^\diamond \colon \mathrm{Aut}^+(\Heis) \to H^1(\Sigma;\Z) \cong \mathrm{Hom}(H,\Z)$ (which is not a group homomorphism) given by the assignment $\varphi \mapsto \varphi^\diamond = \mathrm{pr}_1(\varphi(0,-))$.

For a mapping class $f\in \mathfrak{M}(\Sigma)$, the map $f_\Heis$ of \eqref{eq:projection-equivariance} is represented as follows:
\begin{equation}\label{eq:action}
f_\Heis \colon (k,x)\mapsto (k+\delta_f(x),f_*(x)),
\end{equation}
where $\delta_f = (f_\Heis)^\diamond \in H^1(\Sigma;\Z) \cong \mathrm{Hom}(H,\Z)$.
We proved in \cite[\S 3]{HeisenbergHomology} that the map $\delta \colon \mathfrak{M}(\Sigma) \to H^1(\Sigma;\Z)$ given by $f\mapsto \delta_f$ is a crossed homomorphism, meaning that
\[
\delta_{g\circ f}(x) = \delta_f(x)+f^*(\delta_g)(x)\ .
\]
We also identified this crossed homomorphism, showing that it coincides with the combinatorially-defined crossed homomorphism $\mathfrak{d}$ constructed by Morita \cite{Morita1989}, who proved that it represents a generator of $H^1(\mathfrak{M}(\Sigma);H^1(\Sigma;\Z))$, which is infinite cyclic. In fact, a different crossed homomorphism $\psi$ had been constructed somewhat earlier by Earle \cite{Earle1978}, and turned out, in light of \cite{Morita1989}, also to represent a generator of $H^1(\mathfrak{M}(\Sigma);H^1(\Sigma;\Z))$. The precise relationship between the crossed homomorphisms $\mathfrak{d}$ and $\psi$ was elucidated in \cite{Kuno2009}. In \S\ref{Trapp-representation} below, we discuss the relationship of $\mathfrak{d} = \delta$ to the \emph{Trapp representation} \cite{Trapp} (see Proposition \ref{prop:identification_crossed_hom}).

\begin{definition}
\label{def:Earle-Morita}
The \emph{Earle-Morita subgroup} $\mathrm{Mor}(\Sigma) \subseteq \mathfrak{M}(\Sigma)$ is defined to be the kernel of $\mathfrak{d}$.
\end{definition}

\begin{remark}
\label{rmk:Earle-Morita-choices}
The Earle-Morita subgroup is not a normal subgroup of $\mathfrak{M}(\Sigma)$, despite being a kernel; this is because it is a kernel of a \emph{crossed} homomorphism. We remark also that there are many Earle-Morita subgroups, since the definition of $\mathfrak{d}$ (or, equivalently, the definition of $\delta$) depends on a choice. The definition of $\delta$ above uses the splitting of the short exact sequence \eqref{eq:Aut-Heis-ses}. Note that it depends on the choice of isomorphism \eqref{eq:Heisenberg-quotient-isomorphism} in Proposition \ref{hom_phi}, identifying the relevant quotient of the surface braid group with (an explicit model for) the Heisenberg group.
\end{remark}

\subsection{The Chillingworth subgroup}
\label{subsec:Chillingworth}

Recall that the \emph{Torelli subgroup} $\mathfrak{T}(\Sigma) \subseteq \mathfrak{M}(\Sigma)$ consists of those elements of the mapping class group whose natural action on $H_1(\Sigma;\Z)$ is trivial. The restriction of the crossed homomorphism $\delta \colon f\mapsto \delta_f$ to the Torelli group is a homomorphism. 
We will first describe this homomorphism in relation with the action of the Torelli group on homotopy classes of vector fields. Recall that the set $\Xi(\Sigma)$ of homotopy classes of non-vanishing vector fields on $\Sigma$ supports a natural simply transitive action of $H^1(\Sigma;\Z)$ (in other words, an affine structure over $\Z$ with associated $\Z$-module $H^1(\Sigma;\Z)$), and the action of $\mathfrak{M}(\Sigma)$ is compatible with this action. It follows that the Torelli group acts by translation on $\Xi(\Sigma)$, which defines a homomorphism $e \colon \mathfrak{T}(\Sigma)\rightarrow H^1(\Sigma;\Z)$. A formula for $e(f)([\gamma])$, where $\gamma$ is a regular curve, is given by the variation of the winding number. For convenience we recall some details about the winding number below.

Fix a Riemannian metric on $\Sigma$. A non-vanishing vector field $X$ gives a trivialisation of the unit tangent bundle $T_1(\Sigma)\cong \Sigma\times S^1$. The winding number $\omega_X(\gamma)$ of a regular oriented curve $\gamma$ is the degree of the second component of the unit tangent vector. It can be computed as follows.
Assuming that $\gamma$ is transverse to $X$ except at a finite set $\gamma \pitchfork X$ of points, where it looks locally as in Figure \ref{fig:winding-sign}, then
\[
\omega_X(\gamma) = \sum_{p \in \gamma \pitchfork X} \mathrm{sgn}(p),
\]
where $\mathrm{sgn}(p)$ is defined in Figure \ref{fig:winding-sign}. Notice that only the points $p$ where the tangent vector of $\gamma$ is \emph{parallel} to $X$ count towards this sum; those that point in the opposite direction to $X$ do not.

\begin{figure}[t]
\centering
\includegraphics[scale=0.8]{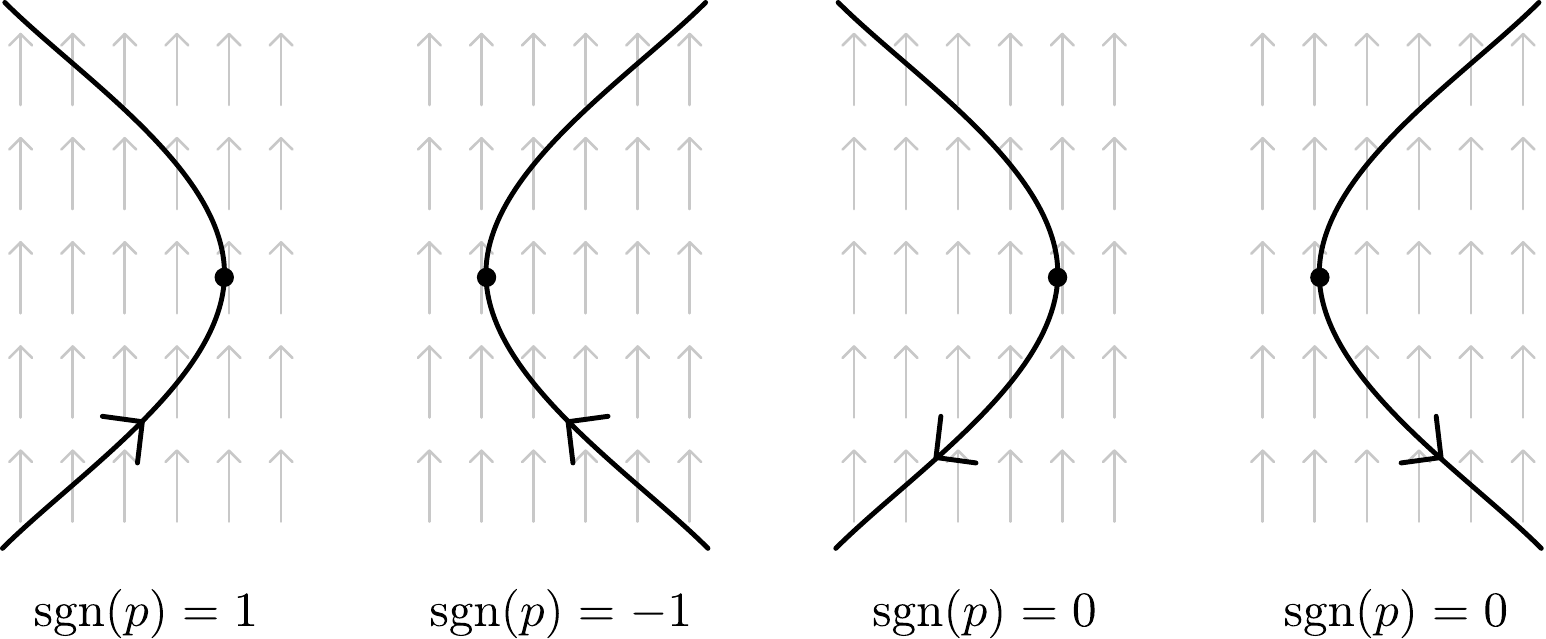}
\caption{The sign of a point on $\gamma$ that is tangent to $X$.}
\label{fig:winding-sign}
\end{figure}

\begin{definition}
The \emph{Chillingworth homomorphism} $e \colon \mathfrak{T}(\Sigma)\rightarrow H^1(\Sigma;\Z)$, studied in \cite{Chillingworth,Johnson}, is defined by
\begin{equation}
\label{eq:Chillingworth-formula}
e(f)([\gamma]) = \omega_X(f \circ \gamma) - \omega_X(\gamma)\ .
\end{equation}
Its kernel is the \emph{Chillingworth subgroup} $\mathrm{Chill}(\Sigma)$. We note that $e$ does not depend on the choice of non-vanishing vector field $X$, but extends to a crossed homomorphism $e_X \colon \mathfrak{M}(\Sigma)\rightarrow H^1(\Sigma;\Z)$ that does, as we shall discuss in \S\ref{Trapp-representation} (see Definition \ref{def:Trapp}).
\end{definition}

\begin{remark}
Since we have $\delta = \mathfrak{d}$ (as recalled in \S\ref{subsec:automorphisms-of-Heis} above) and $e = \delta$ on the Torelli group (Lemma \ref{lem:coincide} below), we have:
\[
\mathrm{Chill}(\Sigma) = \mathrm{ker}(e) = \mathfrak{T}(\Sigma) \cap \mathrm{ker}(\mathfrak{d}) = \mathfrak{T}(\Sigma) \cap \mathrm{Mor}(\Sigma).
\]
Equivalently, we may say that $\mathrm{Chill}(\Sigma)$ is the intersection of the kernels of $\mathfrak{s}$ and $\mathfrak{d}$, in other words it is the kernel of $(\mathfrak{s},\mathfrak{d}) \colon \mathfrak{M}(\Sigma) \to Sp(H) \ltimes H$. Notice in particular that, although the Earle-Morita subgroup $\mathrm{Mor}(\Sigma)$ depends on a non-canonical choice (see Remark \ref{rmk:Earle-Morita-choices}), its intersection with the Torelli group does not.
\end{remark}

The following lemma is Proposition 3.7 of \cite{Brendlethesis}. The proof there uses a result of \cite{Morita1993}. We give an independent proof below.

\begin{lemma}
\label{lem:coincide}
The homomorphisms $\delta$ and $e$ coincide on the Torelli group and have image $\delta(\mathfrak{T}(\Sigma) ) = 2.H^1(\Sigma;\Z)$.
\end{lemma}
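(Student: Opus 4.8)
The plan is to prove the identity $\delta=e$ and the image computation separately, deducing the image from the identity together with a direct study of the winding-number homomorphism. First I would observe that both maps are genuine homomorphisms on $\mathfrak{T}(\Sigma)$: for $f\in\mathfrak{T}(\Sigma)$ we have $f^*=\id$ on $H^1(\Sigma;\Z)$, so the cocycle identity $\delta_{g\circ f}(x)=\delta_f(x)+f^*(\delta_g)(x)$ degenerates to additivity, while $e$ is a homomorphism by definition. To prove $\delta=e$ it then suffices to check equality on a generating set, and for $g\geq 3$ the Torelli group is generated by bounding-pair maps $h=T_cT_d^{-1}$ (Johnson \cite{Johnson}), with $c,d$ disjoint non-separating curves satisfying $[c]=[d]=:\mu$. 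Write $\widehat{\mu}\in H^1(\Sigma;\Z)$ for the functional $x\mapsto x.\mu$.

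On the winding-number side I would use that $e$ is the restriction to $\mathfrak{T}(\Sigma)$ of the crossed homomorphism $e_X$ (Definition \ref{def:Trapp}). A local analysis of the Dehn twist, together with the transvection formula $(T_c)_*(x)=x+(x.c)[c]$ and $\mathfrak{M}(\Sigma)$-equivariance, shows that $e_X(T_c)$ is a multiple of $\widehat{[c]}$, namely $e_X(T_c)=(\omega_X(c)+\epsilon)\widehat{[c]}$ for a universal constant $\epsilon$ independent of $c$ and $X$ (cf.\ \cite{Trapp}). Substituting into the cocycle identity for $h=T_cT_d^{-1}$, and using $[c]=[d]=\mu$ (so that $(T_d^{-1})^*\widehat{\mu}=\widehat{\mu}$ and the constant $\epsilon$ cancels), yields the closed form
\[
e(h)=\bigl(\omega_X(c)-\omega_X(d)\bigr)\,\widehat{\mu},
\]
a quantity that is manifestly independent of $X$ because $[c]=[d]$.

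The matching computation on the Heisenberg side is the step I expect to be the main obstacle. By construction $\delta_h(\mu)$ is the central ($u$-)coordinate of $\phi\bigl(h_{\mathbb{B}_n(\Sigma)}(\zeta)\bigr)$, where $\zeta$ is the surface-braid generator carrying the class $\mu$. The task is to read this coordinate off directly from the action of the twists $T_c,T_d$ on $\mathbb{B}_n(\Sigma)$: a Dehn twist splices a copy of the twisting curve into each strand meeting it, and the exponent sum of the letters $\sigma_1^{\pm 1}$ it produces---after reduction modulo $[\sigma_1,\mathbb{B}_n(\Sigma)]^N$---records the self-linking of the pushed-off curve relative to the boundary framing, i.e.\ its winding number. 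Carrying this out and subtracting the contributions of $c$ and $d$ should give $\delta(h)=(\omega_X(c)-\omega_X(d))\widehat{\mu}=e(h)$; as bounding-pair maps generate, this proves $\delta=e$ on $\mathfrak{T}(\Sigma)$. Alternatively, one may match $\delta$ with $e_X$ already on every Dehn twist, obtaining the stronger equality $\delta=e_X$ of crossed homomorphisms on all of $\mathfrak{M}(\Sigma)$ and bypassing the need for a generating set.

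It remains to compute the image. For the containment $\delta(\mathfrak{T}(\Sigma))\subseteq 2\,H^1(\Sigma;\Z)$ I would invoke $\delta=e$ together with the mod-$2$ reduction of the winding number: the assignment $\gamma\mapsto\omega_X(\gamma)+1\bmod 2$ is Johnson's quadratic refinement of the intersection form and depends only on $[\gamma]\in H_1(\Sigma;\Z/2)$; since $f_*=\id$ for $f\in\mathfrak{T}(\Sigma)$, we get $\omega_X(f\circ\gamma)\equiv\omega_X(\gamma)\pmod 2$, so every value of $e(f)$ is even. For the reverse containment (which uses generation of $\mathfrak{T}(\Sigma)$ by bounding-pair maps, hence $g\geq 3$) I would exhibit explicit bounding pairs: one can choose $c,d$ homologous to any prescribed primitive class $\mu$ but with $\omega_X(c)-\omega_X(d)=\pm 2$, which realises $2\,\widehat{\mu}$, and letting $\mu$ range over a symplectic basis produces a generating set of $2\,H^1(\Sigma;\Z)$. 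Combining the two containments gives $\delta(\mathfrak{T}(\Sigma))=e(\mathfrak{T}(\Sigma))=2\,H^1(\Sigma;\Z)$.
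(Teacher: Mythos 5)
There is a genuine gap, and it sits exactly where you flag it: the Heisenberg-side computation. Several of your ingredients are correct — additivity of $\delta$ and $e$ on $\mathfrak{T}(\Sigma)$, the closed form $e(T_cT_d^{-1})=(\omega_X(c)-\omega_X(d))\widehat{\mu}$ derived from Trapp's twist formula (the unknown constant $\epsilon$ does cancel, since $[c]=[d]$), the spin-structure/quadratic-form argument for $\delta(\mathfrak{T}(\Sigma))\subseteq 2.H^1(\Sigma;\Z)$, and the realisation of a generating set of $2.H^1(\Sigma;\Z)$ by explicit bounding pairs. But the matching identity $\delta(T_cT_d^{-1})=(\omega_X(c)-\omega_X(d))\widehat{\mu}$ for an \emph{arbitrary} bounding pair is asserted, not proved: the claim that the exponent sum of the $\sigma_1^{\pm 1}$-letters created by a Dehn twist, after reduction modulo $[\sigma_1,\mathbb{B}_n(\Sigma)]^N$, ``records the winding number'' is precisely the bridge between $\delta$ (defined via the quotient $\phi\colon\mathbb{B}_n(\Sigma)\to\Heis$) and $e$ (defined via vector fields). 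Asserting it for every bounding pair is assuming the lemma rather than proving it, and carrying it out in that generality is a substantial computation. The paper's proof avoids ever needing such a general formula: genus-one bounding-pair maps form a single conjugacy class in $\mathfrak{M}(\Sigma)$, so $\mathfrak{T}(\Sigma)$ is \emph{normally} generated by one such map $f$; since both $\delta$ and $e$ extend to crossed homomorphisms on all of $\mathfrak{M}(\Sigma)$, the elementary Lemma \ref{lem:crossed-hom-agree} (two crossed homomorphisms agreeing on a normally generating set of the kernel of the action agree on that kernel) reduces everything to checking $\delta_f=e(f)$ for a \emph{single} explicit $f=T_\gamma T_\delta^{-1}$ in a convenient model, where $f_\Heis$ can be read off on the generators $\tilde a_i,\tilde b_i$ and the winding numbers counted by hand. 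This normal-generation reduction is the key idea your proposal is missing, and it is what makes the hard step finite.

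Your proposed alternative — matching $\delta$ with $e_X$ on every Dehn twist to get $\delta=e_X$ on all of $\mathfrak{M}(\Sigma)$ — cannot work as stated. The two crossed homomorphisms are in general \emph{not} equal on $\mathfrak{M}(\Sigma)$; they are only cohomologous (Proposition \ref{prop:identification_crossed_hom}). Indeed $e_X$ changes by a coboundary as $X$ varies while $\delta$ is fixed, so literal equality can hold for at most one homotopy class of $X$, tied to the parametrisation choices implicit in $\delta$ (Remark \ref{rmk:Earle-Morita-choices}); and verifying it on every twist would in any case require the general Dehn-twist formula for $\delta$, i.e.\ the very computation that is missing. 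A smaller point: your argument (like the paper's) rests on Johnson's generation theorem, which requires $g\geq 3$; you note this, but it leaves low genus untreated, so at minimum the reduction to large genus would need to be addressed separately.
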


From formula \eqref{eq:action} it follows that the kernel of the action 
$\Psi \colon \mathfrak{M}(\Sigma)\rightarrow \mathrm{Aut}^+(\Heis)$ is contained in the Torelli group; we may therefore identify this kernel as a corollary of Lemma \ref{lem:coincide}.

\begin{proposition}
\label{kernel_of_Psi}
For any genus $g \geq 1$, we have $\mathrm{ker}(\Psi) = \mathrm{Chill}(\Sigma)$.
\end{proposition}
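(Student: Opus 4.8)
The plan is to read the kernel of $\Psi$ directly off the explicit formula \eqref{eq:action} for $f_\Heis$, and then to invoke Lemma \ref{lem:coincide} in order to recognise the resulting subgroup as $\mathrm{Chill}(\Sigma)$. Since Lemma \ref{lem:coincide} is available, this is essentially a corollary and there is no substantial obstacle; the work consists entirely of assembling the two conditions that characterise the kernel.

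First I would observe that, by \eqref{eq:action}, the automorphism $f_\Heis$ sends $(k,x)$ to $(k+\delta_f(x),f_*(x))$. Hence $f_\Heis = \mathrm{Id}_\Heis$ if and only if $(k+\delta_f(x),f_*(x)) = (k,x)$ for all $(k,x) \in \Heis$, which amounts to two independent conditions: comparing second coordinates gives $f_*(x)=x$ for all $x \in H$, and comparing first coordinates gives $\delta_f(x)=0$ for all $x \in H$. The first condition says precisely that $f$ lies in the Torelli group $\mathfrak{T}(\Sigma)$, while the second says that $\delta_f = 0$ in $H^1(\Sigma;\Z)$. Writing $\ker(\delta) := \{f \in \mathfrak{M}(\Sigma) : \delta_f = 0\}$ (which is a well-defined subset even though $\delta$ is only a crossed homomorphism), we therefore obtain $\ker(\Psi) = \mathfrak{T}(\Sigma) \cap \ker(\delta)$. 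In particular this recovers the remark preceding the statement that $\ker(\Psi) \subseteq \mathfrak{T}(\Sigma)$.

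Next I would restrict to the Torelli group, where $\delta$ is an honest homomorphism. By Lemma \ref{lem:coincide} the homomorphisms $\delta$ and $e$ agree on $\mathfrak{T}(\Sigma)$, so for $f \in \mathfrak{T}(\Sigma)$ we have $\delta_f = 0$ if and only if $e(f)=0$, i.e.\ if and only if $f \in \ker(e) = \mathrm{Chill}(\Sigma)$. Since $\mathrm{Chill}(\Sigma) \subseteq \mathfrak{T}(\Sigma)$ by definition, combining with the previous paragraph yields
\[
\ker(\Psi) = \mathfrak{T}(\Sigma) \cap \ker(\delta) = \ker\bigl(\delta|_{\mathfrak{T}(\Sigma)}\bigr) = \ker(e) = \mathrm{Chill}(\Sigma),
\]
as required. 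The only point meriting any care is the logical bookkeeping of the two simultaneous conditions extracted from \eqref{eq:action}; all the genuine content—namely the coincidence of $\delta$ with the Chillingworth homomorphism $e$ on the Torelli group—is already packaged into Lemma \ref{lem:coincide}.
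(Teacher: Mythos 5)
Your proof is correct and follows exactly the same route as the paper's: both read off from \eqref{eq:action} that $\ker(\Psi) = \mathfrak{T}(\Sigma) \cap \ker(\delta)$ and then invoke Lemma \ref{lem:coincide} to identify this with $\ker(e) = \mathrm{Chill}(\Sigma)$. You have simply spelled out the coordinate-by-coordinate bookkeeping that the paper leaves implicit.
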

\begin{proof}
From formula \eqref{eq:action} we see that $\mathrm{ker}(\Psi) = \mathfrak{T}(\Sigma) \cap \mathrm{ker}(\delta)$; by Lemma \ref{lem:coincide} this is equal to $\mathrm{ker}(e) = \mathrm{Chill}(\Sigma)$.
\end{proof}

Denote by $\mathrm{Inn}(\Heis)$ the group of \emph{inner} automorphisms of the Heisenberg group $\Heis$. From Lemma \ref{lem:coincide}, we may also identify the \emph{projective kernel} of the action $\Psi$, namely the subgroup $\Psi^{-1}(\mathrm{Inn}(\Heis))$ that acts by inner automorphisms.

\begin{proposition}
\label{Psi_inner}
For any genus $g \geq 1$, we have $\Psi^{-1}(\mathrm{Inn}(\Heis)) = \mathfrak{T}(\Sigma)$.
\end{proposition}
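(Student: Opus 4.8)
The plan is to identify the group $\mathrm{Inn}(\Heis)$ of inner automorphisms explicitly inside the extension \eqref{eq:Aut-Heis-ses}, and then to read off the projective kernel from formula \eqref{eq:action} together with the description of $\delta(\mathfrak{T}(\Sigma))$ supplied by Lemma \ref{lem:coincide}. Since the central element $u=(1,0)$ is fixed by any conjugation, every inner automorphism lies in $\mathrm{Aut}^+(\Heis)$, so the exact sequence applies and it makes sense to compare $\mathrm{Inn}(\Heis)$ with the image of $j$.

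First I would compute conjugation in $\Heis$. Using the product \eqref{eq:Heisenberg-product} and the fact that the intersection form is alternating (so that $y.x=-x.y$), a direct calculation shows that conjugation by $(l,y)$ sends $(k,x)$ to $(k-2(x.y),x)$. In particular every inner automorphism acts trivially on $H=H_1(\Sigma;\Z)$, hence lies in the image of $j$, and equals $j(c_y)$ where $c_y=[x\mapsto -2(x.y)]$. Because the intersection form is unimodular, the map $y\mapsto[x\mapsto x.y]$ is an isomorphism $H\xrightarrow{\ \cong\ }H^1(\Sigma;\Z)$; therefore $y\mapsto c_y$ has image exactly $2.H^1(\Sigma;\Z)$, giving the clean identification
\[
\mathrm{Inn}(\Heis)=j\bigl(2.H^1(\Sigma;\Z)\bigr)\subseteq\ker(\cL).
\]

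The two inclusions then follow immediately. If $f_\Heis$ is inner, then $f_*=\cL(f_\Heis)=\id$, so $f$ acts trivially on homology and lies in $\mathfrak{T}(\Sigma)$; this gives $\Psi^{-1}(\mathrm{Inn}(\Heis))\subseteq\mathfrak{T}(\Sigma)$. Conversely, if $f\in\mathfrak{T}(\Sigma)$ then $f_*=\id$ and formula \eqref{eq:action} reads $f_\Heis=j(\delta_f)$; by Lemma \ref{lem:coincide} we have $\delta_f\in\delta(\mathfrak{T}(\Sigma))=2.H^1(\Sigma;\Z)$, so $f_\Heis\in j(2.H^1(\Sigma;\Z))=\mathrm{Inn}(\Heis)$, proving the reverse inclusion.

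The only substantive ingredient is Lemma \ref{lem:coincide}: it pins down the image of $\delta$ on the Torelli group as precisely $2.H^1(\Sigma;\Z)$, which is exactly the group of ``inner cocharacters'' produced by the conjugation formula. The coincidence of the two factors of $2$ is what makes the proposition true on the nose, and is the conceptual heart of the argument; by contrast the conjugation computation and the appeal to unimodularity are routine, so I anticipate no real obstacle beyond careful bookkeeping of the factor of $2$ and the sign in the intersection pairing.
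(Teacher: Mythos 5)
Your proposal is correct and follows essentially the same route as the paper: compute the conjugation formula in $\Heis$ (which yields $(k,y)\mapsto(k+2x.y,y)$ under conjugation by $(l,x)$, matching your sign conventions), deduce the forward inclusion from triviality of the induced action on $H$, and deduce the reverse inclusion from Lemma \ref{lem:coincide} together with unimodularity of the intersection form (i.e.\ Poincar\'e duality) to realise each element of $2.H^1(\Sigma;\Z)$ as an inner automorphism. Your explicit identification $\mathrm{Inn}(\Heis)=j\bigl(2.H^1(\Sigma;\Z)\bigr)$ is just a slightly more packaged form of the comparison of formulas \eqref{eq:action} and \eqref{eq:inner} that the paper performs.
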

\begin{proof}
Conjugation in the Heisenberg group $\Heis$ is given by the  formula
\begin{equation} \label{eq:inner}
(l,x)(k,y)(-l,-x)=(l,y)(k,x)(-l,-x)=(k+2x.y , y)
\end{equation}

First, if $\Psi(f) = f_\Heis$ is an inner automorphism, then its induced action on $H$ must be trivial. This means that $f$  lies in the Torelli group. Conversely, if $f \in \mathfrak{T}(\Sigma)$, we have from Lemma \ref{lem:coincide} that $\delta_f$ is in $2.H^1(\Sigma;\Z)$.
Using Poincaré duality, we obtain $x\in H$ such that $\delta_f(y)=2x.y$ for every $y \in H$. Comparing formulas \eqref{eq:action} and \eqref{eq:inner}, we deduce that $f_\Heis$ is inner.
\end{proof}

We will use the following basic lemma about crossed homomorphisms in the proof of Lemma \ref{lem:coincide}.

\begin{lemma}
\label{lem:crossed-hom-agree}
Let $G$ be a group acting on an abelian group $K$ and denote by $N \subseteq G$ the kernel of this action. Suppose that $S \subseteq N$ normally generates $N$ in $G$. If two crossed homomorphisms $\theta_1,\theta_2 \colon G \to K$ agree on $S$, then they agree on $N$.
\end{lemma}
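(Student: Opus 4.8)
\emph{Reduction to one crossed homomorphism.} The plan is first to reduce to a single map. Since $\theta_1$ and $\theta_2$ are crossed homomorphisms for the \emph{same} action of $G$ on $K$, their difference $\theta := \theta_1 - \theta_2$ is again a crossed homomorphism: the inhomogeneous cocycle terms are identical for both and cancel, leaving the cocycle identity $\theta(gh) = \theta(g) + g\cdot\theta(h)$, where I write $g\cdot(-)$ for the action (the computation is insensitive to the choice of convention for crossed homomorphisms). By hypothesis $\theta$ vanishes on $S$, so it suffices to prove that a crossed homomorphism vanishing on $S$ vanishes on all of $N$.

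\emph{Two structural observations.} Next I would record two facts. First, $N$ is the kernel of the action $G \to \mathrm{Aut}(K)$, hence a \emph{normal} subgroup of $G$; the hypothesis that $S$ normally generates $N$ therefore means that $N$ is generated \emph{as a group} by the conjugates $gsg^{-1}$ with $g\in G$ and $s\in S$, together with their inverses. Second, every element of $N$ acts trivially on $K$, so the restriction $\theta|_N$ is an ordinary group homomorphism $N \to K$ (the twist disappears). Combining these, it is enough to show $\theta(gsg^{-1}) = 0$ for all $g\in G$ and $s\in S$: the homomorphism property of $\theta|_N$ then carries this vanishing across products and inverses to the whole of $N$.

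\emph{The key computation and the main obstacle.} The heart of the argument is the identity $\theta(gsg^{-1}) = 0$. After the formal preliminaries $\theta(1)=0$ and $\theta(g^{-1}) = -\,g^{-1}\cdot\theta(g)$, one expands $\theta(gsg^{-1})$ using the cocycle rule and feeds in two inputs: the hypothesis $\theta(s)=0$, and — crucially — the fact that $s\in N$ acts as the identity on $K$, so that $gs$ and $g$ induce the same action. The first gives $\theta(gs) = \theta(g)$, and the second lets the outer action factor collapse, yielding $\theta(gsg^{-1}) = \theta(g) + g\cdot\theta(g^{-1}) = 0$. The only place demanding care — and the step I would regard as the crux rather than a genuine obstacle — is tracking the action factors so that the twisted terms cancel; this succeeds precisely because we have confined attention to $N$, where the action is trivial. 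Everything else is routine cocycle bookkeeping, and I expect no serious difficulty beyond setting up this cancellation.
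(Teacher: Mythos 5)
Your proof is correct and follows essentially the same route as the paper's: the heart of both arguments is the cocycle expansion of $\theta(gsg^{-1})$ combined with the two facts that $s$ acts trivially on $K$ and that $\theta(g) + g\cdot\theta(g^{-1}) = 0$. Your preliminary reduction to the single crossed homomorphism $\theta_1 - \theta_2$ is a cosmetic repackaging of the paper's parallel computation of $\theta_i(gsg^{-1}) = g\cdot\theta_i(s)$ for $i=1,2$, and your explicit remark that $\theta|_N$ is an honest homomorphism (so vanishing on the conjugates of $S$ propagates to products and inverses) makes precise a point the paper leaves implicit.
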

\begin{proof}
The assumption that $S$ normally generates $N$ in $G$ means that
\[
T = \{ gsg^{-1} \mid s \in S, g \in G \} \subseteq N
\]
generates $N$. It will therefore suffice to show that $\theta_1$ and $\theta_2$ agree on $T$. Let $s\in S$ and $g\in G$. We know by hypothesis that $\theta_1(s) = \theta_2(s)$, and we need to show that $\theta_1(g^{-1}sg) = \theta_2(g^{-1}sg)$. First, observe that, for $i=1,2$, we have
\[
\theta_i(g) + g.\theta_i(g^{-1}) = \theta_i(g^{-1}g) = \theta_i(1) = 0.
\]
Using this, and the fact that $s\in N$, so it acts trivially on $K$, we deduce that
\begin{align*}
\theta_i(g^{-1}sg) &= \theta_i(g) + g.\theta_i(s) + gs.\theta_i(g^{-1}) \\
&= \theta_i(g) + g.\theta_i(s) + g.\theta_i(g^{-1}) \\
&= g.\theta_i(s).
\end{align*}
Thus $\theta_1(g^{-1}sg) = g.\theta_1(s) = g.\theta_2(s) = \theta_2(g^{-1}sg)$, as required.
\end{proof}

\begin{proof}[Proof of Lemma \ref{lem:coincide}]
The Torelli group is generated by genus-one bounding pair diffeomorphisms \cite[Theorem 2]{Johnson79}, and this generating set is a single conjugacy class in the full mapping class group. It follows that the Torelli group is normally generated by a(ny) single genus-one bounding pair diffeomorphism $f$. We may therefore apply Lemma \ref{lem:crossed-hom-agree} to the setting where $G = \mathfrak{M}(\Sigma)$, $K = H^1(\Sigma;\Z)$, $N = \mathfrak{T}(\Sigma)$ and $S = \{f\}$. Both $\delta$ and $e$ extend to crossed homomorphisms defined on the full mapping class group (for $e$ such an extension is given by the Trapp representation, as recalled in \S\ref{Trapp-representation} below). To prove that $\delta$ and $e$ coincide on $\mathfrak{T}(\Sigma)$, it is therefore sufficient to show that $\delta$ and $e$ agree on the single element $f$. Moreover, to show that $\delta(\mathfrak{T}(\Sigma)) \subseteq 2.H^1(\Sigma;\Z)$ it is enough to show that this common value $\delta_f = e(f)$ lies in $2.H^1(\Sigma;\Z)$. Specifically, we will take this element to be
\[
f = BP(\gamma,\delta) = T_\gamma . T_\delta^{-1},
\]
the genus one bounding pair diffeomorphism depicted in Figure~\ref{fig:bounding-pair}, and we will show that the elements $e(f)$ and $\delta_f$ of $H^1(\Sigma;\Z) \cong \mathrm{Hom}(H,\Z)$ are both equal to the homomorphism $H = H_1(\Sigma;\Z) \to \Z$ given by
\begin{equation}
\label{eq:evaluation-of-f}
a_1 \mapsto 2 \qquad\text{,}\qquad a_i \mapsto 0 \text{ for } i \geq 2 \qquad\text{and}\qquad b_i \mapsto 0 \text{ for } i \geq 1.
\end{equation}
As explained just above, this calculation will show that $\delta = e$ on $\mathfrak{T}(\Sigma)$ and that $\delta(\mathfrak{T}(\Sigma)) \subseteq 2.H^1(\Sigma;\Z)$. The opposite inclusion will also follow, since $\delta_f = \eqref{eq:evaluation-of-f}$ is a free generator of $2.H^1(\Sigma;\Z)$ and the other $2g-1$ elements of the evident free generating set may be realised similarly as $\delta_{f'}$ for analogous elements $f'$.

\begin{figure}[ht]
\centering
\includegraphics[scale=0.6]{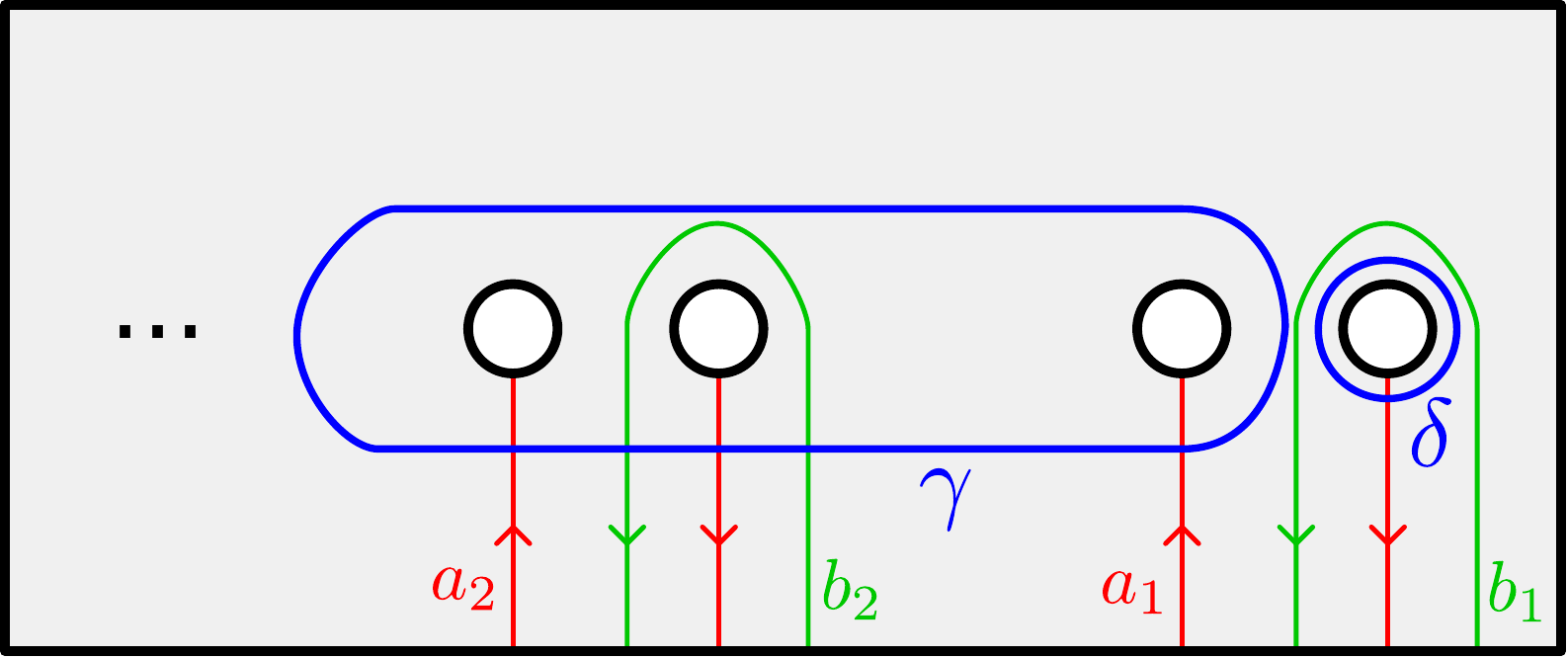}
\caption{The surface $\Sigma$ is obtained by identifying the $2g$ interior boundary components (four of which are depicted above) in $g$ pairs by reflections. The bounding pair map from the proof of Lemma \ref{lem:coincide} is $BP(\gamma,\delta) = T_\gamma . T_\delta^{-1}$, for the blue curves $\gamma$ and $\delta$. The red and green arcs form a symplectic basis for the first homology of $\Sigma$ relative to the bottom edge $\partial^-(\Sigma)$.}
\label{fig:bounding-pair}
\end{figure}

It therefore remains to calculate that $\delta_f = e(f) = \eqref{eq:evaluation-of-f}$.

We first calculate $\delta_f$ from the automorphism $f_\Heis$. We may directly read off from Figure \ref{fig:bounding-pair} the effect of $f_\Heis$ on the elements $\tilde a_i$ and $\tilde b_i$ of $\Heis$. It clearly acts trivially except possibly on the three elements $\tilde a_2=(0,a_2)$, $\tilde b_2=(0,b_2)$ and $\tilde a_1=(0,a_1)$, since the others may be realised disjointly from $\gamma \cup \delta$, and:
\begin{align*}
\tilde a_1 &\mapsto [\tilde a_2,\tilde b_2].\tilde a_1 = u^2 \tilde a_1=(2,a_1) \\
\tilde a_2 &\mapsto [\tilde a_2,\tilde b_2].\tilde a_1 \tilde b_1 \tilde a_1^{-1}. \tilde a_2. \tilde a_1 \tilde b_1^{-1} \tilde a_1^{-1}. [\tilde a_2,\tilde b_2]^{-1} = \tilde a_2 \\
\tilde b_2 &\mapsto [\tilde a_2,\tilde b_2].\tilde a_1 \tilde b_1 \tilde a_1^{-1}. \tilde b_2. \tilde a_1 \tilde b_1^{-1} \tilde a_1^{-1}. [\tilde a_2,\tilde b_2]^{-1} = \tilde b_2.
\end{align*}
This gives the calculation $\delta_f = \eqref{eq:evaluation-of-f}$.

\begin{figure}[ht]
\centering
\includegraphics[scale=0.8]{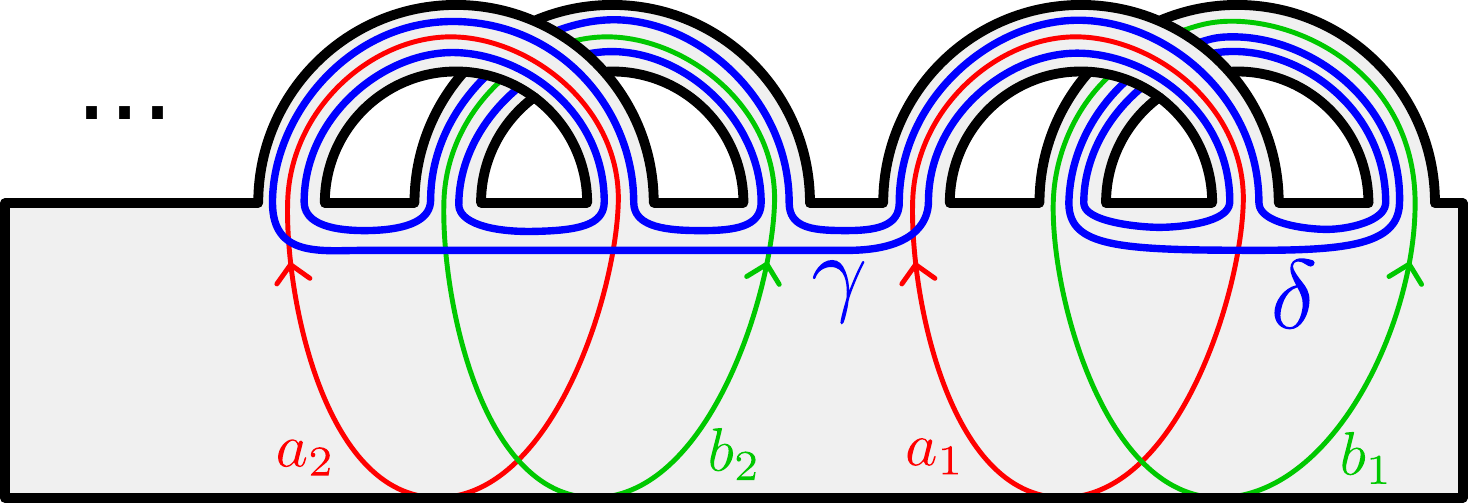}
\caption{An alternative model for the surface $\Sigma$, the bounding pair $(\gamma,\delta)$ and the symplectic basis for the first homology of $\Sigma$ relative to the bottom edge $\partial^-(\Sigma)$.}
\label{fig:bounding-pair-2}
\end{figure}

To calculate $e(f)$, we use the alternative model for the surface $\Sigma$, the bounding pair $(\gamma,\delta)$ and the symplectic basis $a_i,b_i$ for $H$ depicted in Figure \ref{fig:bounding-pair-2}. This model for $\Sigma$ has the advantage of having an obvious non-vanishing vector field $X$, which simply points \emph{upwards} according to the standard framing of the page.

Using this vector field $X$ and comparing to Figure \ref{fig:winding-sign}, we observe that the winding numbers of the symplectic generators $a_i$ and $b_i$ (more precisely, their smooth, closed representatives pictured in Figure \ref{fig:bounding-pair-2}) are given by
\[
\omega_X(a_i) = -1 \qquad\text{and}\qquad \omega_X(b_i) = +1.
\]
We recall that, by definition, $e(f)(c) = \omega_X(f \circ \bar{c}) - \omega_X(\bar{c}) \in \Z$ for any $c = [\bar{c}] \in H$. We clearly have $f\circ \bar{c} = \bar{c}$ for $\bar{c} = a_i \text{ or } b_i$ with $i \geq 3$ or for $\bar{c} = b_1$, since these curves may be represented disjointly from $\gamma \cup \delta$. Hence $e(f)([\bar{c}]) = 0$ for these $\bar{c}$.

The curve $f \circ a_1$ is depicted in Figure \ref{fig:fa1}.

\begin{figure}[ht]
\centering
\includegraphics[scale=0.8]{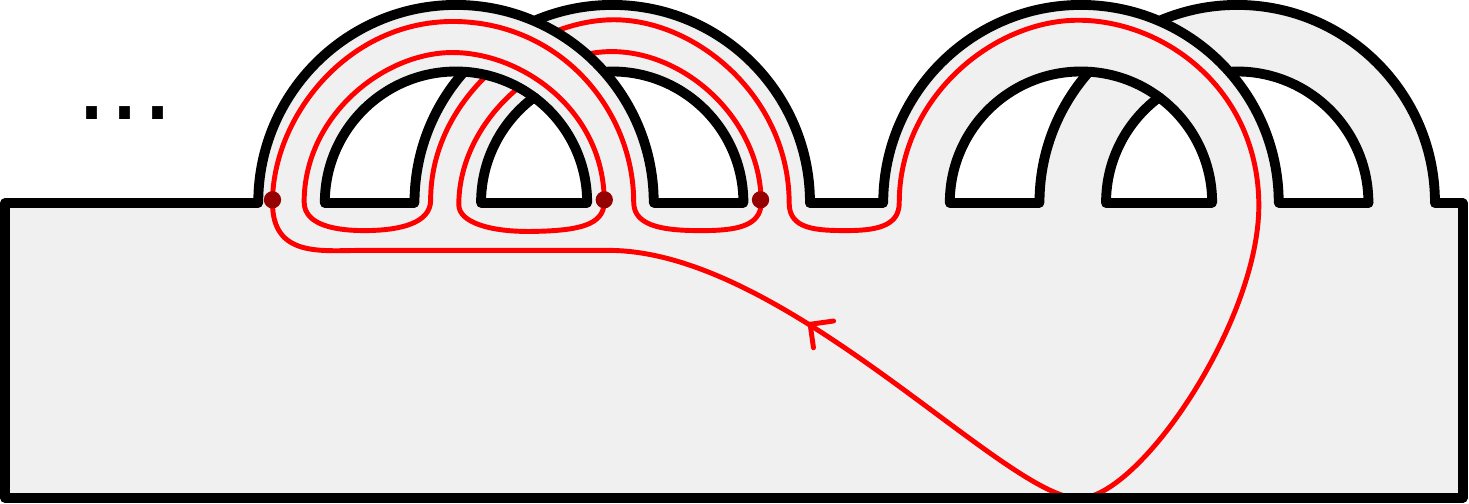}
\caption{The curve $f \circ a_1$ for $f = T_\gamma . T_\delta^{-1}$. The three points where its tangent vector points vertically upwards are marked with dark red points: the left-most one is negative according to Figure \ref{fig:winding-sign}, and the other two are positive. {\small [At first sight it may look like there are two more, but these are not allowed since they do not fit either of the local models of Figure \ref{fig:winding-sign}. We therefore perturb the curve slightly to get rid of these two tangencies with the vector field $X$. Alternatively, we may perturb it differently, to turn each of these disallowed tangencies into a pair of two allowed tangencies with opposite signs, which will therefore cancel in the expression for $\omega_X(f\circ a_1)$.]}}
\label{fig:fa1}
\end{figure}

There are precisely three points on this curve where its tangent vector is equal to the vector field $X$, i.e., where its tangent vector is pointing vertically upwards: two are positive and one is negative (compare the local models in Figure \ref{fig:winding-sign}), hence
\begin{align*}
e(f)(a_1) &= \omega_X(f \circ a_1) - \omega_X(a_1) \\
&= (2-1) - (-1) \\
&= 2.
\end{align*}
Now let $\bar{c}$ be either $a_2$ or $b_2$. In this case the effect of $f$ is simply to conjugate $\bar{c}$ by $\gamma$, so we have that
\begin{align*}
\omega_X(f \circ \bar{c}) &= \omega_X(\gamma) + \omega_X(\bar{c}) - \omega_X(\gamma) \\
&= \omega_X(\bar{c}),
\end{align*}
since positive/negative tangencies with $X$ for $\gamma$ are negative/positive tangencies with $X$ for $\gamma^{-1}$ respectively, and so $e(f)([\bar{c}]) = \omega_X(f \circ \bar{c}) - \omega_X(\bar{c}) = 0$. Thus we have shown that $e(f) \colon H \to \Z$ is also given by \eqref{eq:evaluation-of-f}.
\end{proof}

\subsection{The Trapp representation.}
\label{Trapp-representation}

We next recall the \emph{Trapp representation} \cite{Trapp}, and show that our representation \eqref{eq:action_on_Heis} of $\mathfrak{M}(\Sigma)$ on $\Heis$ may be identified with it, up to ``coboundaries'', when the genus $g$ of $\Sigma$ is at least $2$. This provides an alternative proof of Proposition \ref{kernel_of_Psi} (except when $g=1$), since the kernel of the Trapp representation is equal to the Chillingworth subgroup $\mathrm{Chill}(\Sigma)$ when $g \geq 2$~\cite[Corollary 2.7]{Trapp}.

\begin{definition}
\label{def:Trapp}
Write $H = H_1(\Sigma;\Z)$ and $H^* = \mathrm{Hom}(H,\Z) \cong H^1(\Sigma;\Z)$. The representation of Trapp~\cite{Trapp} is defined as a homomorphism
\begin{equation}
\label{eq:Trapp}
\Phi_X \colon \mathfrak{M}(\Sigma) \longrightarrow Sp(H) \ltimes H^* \subset GL_{2g+1}(\Z)
\end{equation}
lifting the standard symplectic action $\mathfrak{s} \colon \mathfrak{M}(\Sigma) \to Sp(H)$. Having fixed this choice of symplectic action, the homomorphism \eqref{eq:Trapp} corresponds to a choice of crossed homomorphism
\begin{equation}
\label{eq:Trapp-crossed-hom}
e_X \colon \mathfrak{M}(\Sigma) \longrightarrow H^* .
\end{equation}
This crossed homomorphism is given by the variation of the winding number with respect to a fixed non-vanishing vector field $X$ on $\Sigma$, as already discussed in \S\ref{subsec:Chillingworth}; see the formula \eqref{eq:Chillingworth-formula}.
\end{definition}

We therefore have two homomorphisms
\[
\Psi = \eqref{eq:action_on_Heis} \text{ and } \Phi_X = \eqref{eq:Trapp} \colon \mathfrak{M}(\Sigma) \longrightarrow Sp(H) \ltimes H^*
\]
corresponding to crossed homomorphisms $\delta \text{ and } e_X \colon \mathfrak{M}(\Sigma) \to H^*$. We have proven above (Lemma \ref{lem:coincide}) that these crossed homomorphisms are equal when restricted to the Torelli group. We now strengthen this to show that $\delta$ and $e_X$ agree, modulo coboundaries, on the whole mapping class group.

\begin{proposition}
\label{prop:identification_crossed_hom}
For $g\geq 2$, the crossed homomorphisms $\delta$ and $e_X$ represent the same cohomology class in $H^1(\mathfrak{M}(\Sigma);H^*)\cong \Z$. In other words, they are equal modulo principal crossed homomorphisms, i.e.~coboundaries.
\end{proposition}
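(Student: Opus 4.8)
The plan is to upgrade the agreement on the Torelli group (Lemma \ref{lem:coincide}) to agreement in cohomology by showing that the relevant restriction map is injective. Both $\delta$ and $e_X$ are $1$-cocycles for the same coefficient module, namely $H^*$ equipped with its standard $\mathfrak{M}(\Sigma)$-action through $\mathfrak{s}$, so each determines a class in $H^1(\mathfrak{M}(\Sigma); H^*)$, and the claim is exactly that $[\delta] = [e_X]$.

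First I would record that restriction of cocycles induces a homomorphism
\[
\mathrm{res} \colon H^1(\mathfrak{M}(\Sigma); H^*) \longrightarrow H^1(\mathfrak{T}(\Sigma); H^*) = \mathrm{Hom}(\mathfrak{T}(\Sigma), H^*),
\]
where the identification on the right uses that $\mathfrak{T}(\Sigma)$ acts trivially on $H^* = H^1(\Sigma;\Z)$. This same triviality is what makes $\mathrm{res}$ well defined on cohomology: a principal crossed homomorphism $g \mapsto g.v - v$ restricts on $\mathfrak{T}(\Sigma)$ to the zero homomorphism. Hence $\mathrm{res}$ sends the class of a crossed homomorphism to the honest homomorphism obtained by restricting it to $\mathfrak{T}(\Sigma)$.

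Next I would invoke, for $g \geq 2$, Morita's computation that $H^1(\mathfrak{M}(\Sigma); H^*) \cong \Z$ is generated by $[\delta] = [\mathfrak{d}]$. Under $\mathrm{res}$ this generator maps to $\delta|_{\mathfrak{T}(\Sigma)} = e$, the Chillingworth homomorphism, which by Lemma \ref{lem:coincide} is nonzero (its image is $2.H^*$). Since $\mathrm{Hom}(\mathfrak{T}(\Sigma), H^*)$ is torsion-free, and any homomorphism out of $\Z$ sending a generator to a nonzero element of a torsion-free group is injective, it follows that $\mathrm{res}$ is injective. Finally, Lemma \ref{lem:coincide} gives $\delta|_{\mathfrak{T}(\Sigma)} = e = e_X|_{\mathfrak{T}(\Sigma)}$, so $\mathrm{res}([\delta]) = \mathrm{res}([e_X])$, and injectivity yields $[\delta] = [e_X]$. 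Concretely, writing $[e_X] = m[\delta]$ for the unique integer $m$ and applying $\mathrm{res}$ gives $e = m \cdot e$ in a torsion-free group, which forces $m = 1$.

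I do not expect a genuine obstacle here once Lemma \ref{lem:coincide} and Morita's identification are granted, since the real content is imported from those results. The only points that require care are the bookkeeping that $\mathrm{res}$ is well defined on cohomology — which is precisely where the triviality of the Torelli action on $H^*$ is used — and the elementary observation that the common restriction $e$ is a non-torsion element, so that injectivity of $\mathrm{res}$ is immediate. The hypothesis $g \geq 2$ enters only through the cited fact that $H^1(\mathfrak{M}(\Sigma); H^*) \cong \Z$.
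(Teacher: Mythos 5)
Your proposal is correct and follows essentially the same route as the paper: restrict to the Torelli group to get a well-defined map $H^1(\mathfrak{M}(\Sigma);H^*)\to \mathrm{Hom}(\mathfrak{T}(\Sigma),H^*)$, use Morita's computation that the source is infinite cyclic together with torsion-freeness of the target and the non-vanishing of the common restriction $e$ from Lemma \ref{lem:coincide} to conclude injectivity, and hence $[\delta]=[e_X]$. The only cosmetic difference is that you invoke $[\delta]$ being a \emph{generator}, which the paper explicitly avoids (and which is not needed: any homomorphism from $\Z$ to a torsion-free group hitting a nonzero value is injective).
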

\begin{proof}
We will use the homomorphism
\begin{equation}
\label{eq:cohomology-to-Hom}
H^1(\mathfrak{M}(\Sigma);H^*) \longrightarrow \mathrm{Hom}(\mathfrak{T}(\Sigma),H^*)
\end{equation}
given by restricting a crossed homomorphism $\mathfrak{M}(\Sigma) \to H^*$ to the Torelli group. This is well-defined since principal crossed homomorphisms (coboundaries) are trivial on the Torelli group. The right-hand side of \eqref{eq:cohomology-to-Hom} is rather large: by a theorem of Johnson \cite{Johnson85}, the abelianisation of $\mathfrak{T}(\Sigma)$ is isomorphic to $\wedge^3 H \oplus \text{(torsion)}$, so $\mathrm{Hom}(\mathfrak{T}(\Sigma),H^*) \cong \mathrm{Hom}(\wedge^3 H,H^*)$, which is free abelian of rank $2g\binom{2g}{3}$. However, it has the advantage that it is easy to detect when its elements are equal, since it is just a group of homomorphisms (rather than crossed homomorphisms modulo principal ones). On the other hand, the left-hand side of \eqref{eq:cohomology-to-Hom} is much smaller. Indeed, Morita proved in \cite[Proposition 6.4]{Morita1989} that the group $H^1(\mathfrak{M}(\Sigma);H^*)$ is infinite cyclic. (In fact, it is generated by $[\mathfrak{d}]$, which we showed in \cite{HeisenbergHomology} is equal to $[\delta]$, but we will not need this.) In Lemma \ref{lem:coincide} we have proven that $\delta$ and $e_X$ coincide, and are non-trivial, on the Torelli subgroup. Since $\mathrm{Hom}(\mathfrak{T}(\Sigma),H^*)$ is torsion-free, the homomorphism \eqref{eq:cohomology-to-Hom} is injective and the result follows.
\end{proof}

\begin{remark}
In summary, we have considered three crossed homomorphisms
\[
\delta , \mathfrak{d} , e_X \colon \mathfrak{M}(\Sigma) \longrightarrow H^* \cong H^1(\Sigma;\Z),
\]
where $\delta$ is the crossed homomorphism corresponding to the action \eqref{eq:action_on_Heis} of the mapping class group on the Heisenberg group, $\mathfrak{d}$ is Morita's crossed homomorphism (whose precise relationship to Earle's crossed homomorphism $\psi$ is described in \cite{Kuno2009}) and $e_X$ is Chillingworth's crossed homomorphism, depending on a choice of non-vanishing vector field $X$ on $\Sigma$. We showed in \cite{HeisenbergHomology} that $\delta = \mathfrak{d}$ on $\mathfrak{M}(\Sigma)$. In this section, we have shown (Lemma \ref{lem:coincide}) that $\delta = e_X$ when restricted to $\mathfrak{T}(\Sigma)$ and, moreover, that $\delta = e_X$ on $\mathfrak{M}(\Sigma)$ modulo coboundaries (Proposition \ref{prop:identification_crossed_hom}). We note, however, that only the weaker statement of Lemma \ref{lem:coincide} was needed to deduce (Propositions \ref{kernel_of_Psi} and \ref{Psi_inner}) that the kernel of $\Psi$ is  $\mathrm{Chill}(\Sigma)$ and the projective kernel of $\Psi$ is $\mathfrak{T}(\Sigma)$.
\end{remark}

\subsection{Restricting to the Chillingworth subgroup.}
\label{rep-Chillingworth}

Using Proposition \ref{kernel_of_Psi}, we deduce that the twisted representations of $\mathfrak{M}(\Sigma)$ constructed in Theorem \ref{thm:twisted-representation} are in fact untwisted when restricted to $\mathrm{Chill}(\Sigma) \subset \mathfrak{M}(\Sigma)$.

\begin{theorem}
\label{thm:Chill}
Associated to any representation $V$ of $\Heis$ over $R$ and any integer $n\geq 2$, there is a representation
\begin{equation}
\label{eq:Chillingworth-representation}
\mathrm{Chill}(\Sigma) \longrightarrow \mathrm{Aut}_R \bigl( H_n^{BM}\bigl(\mathcal{C}_{n}(\Sigma),\mathcal{C}_{n}(\Sigma,\partial^-(\Sigma)) ;V\bigr)\bigr) ,
\end{equation}
which is a restriction of \eqref{eq:twisted-representation} to a single object of the action groupoid.
\end{theorem}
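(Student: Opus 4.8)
The plan is to exploit the fact that the twisted representation of Theorem \ref{thm:twisted-representation} is packaged as a \emph{functor} on the action groupoid $\mathrm{Ac}(\mathfrak{M}(\Sigma)\curvearrowright\Heis)$: any functor restricts, on the automorphisms of a single object, to an honest group homomorphism. So I would first recall that for an object $\tau$ of a groupoid the self-morphisms $\mathrm{Aut}(\tau)$ form a group under composition, and that a functor $F$ carries this group homomorphically into $\mathrm{Aut}_R(F(\tau))$. Applying the functor \eqref{eq:twisted-representation} thus yields, for each fixed object $\tau$, a genuine untwisted representation of $\mathrm{Aut}_{\mathrm{Ac}}(\tau)$ on the module $\mathcal{V}_n({}_\tau V)$.

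The second step is to identify this automorphism group. By Definition \ref{def:action-groupoid}, the morphisms $\tau \to \tau$ are exactly the elements of $\Psi^{-1}(\tau^{-1}\tau) = \Psi^{-1}(\id) = \mathrm{ker}(\Psi)$, and their composition is multiplication in $\mathfrak{M}(\Sigma)$; hence $\mathrm{Aut}_{\mathrm{Ac}}(\tau)$ is canonically the group $\mathrm{ker}(\Psi)$, independently of $\tau$. By Proposition \ref{kernel_of_Psi} this group is $\mathrm{Chill}(\Sigma)$. Specialising to the object $\tau = \id$, for which ${}_{\id}V = V$ and therefore $\mathcal{V}_n({}_{\id}V) = H_n^{BM}(\mathcal{C}_{n}(\Sigma),\mathcal{C}_{n}(\Sigma,\partial^-(\Sigma));V)$, produces exactly the homomorphism \eqref{eq:Chillingworth-representation}, realised as the restriction of \eqref{eq:twisted-representation} to the endomorphisms of the single object $\id$.

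There is essentially no computational obstacle; the argument is purely formal and rests entirely on Proposition \ref{kernel_of_Psi}. The only point deserving care is to confirm that the twist genuinely disappears: an element $f \in \mathrm{Chill}(\Sigma)$ acts through the groupoid morphism $f \colon \id \circ f_\Heis \to \id$, and since $f_\Heis = \id$ for such $f$, the local systems ${}_{\id \circ f_\Heis}V$ and ${}_{\id}V$ literally coincide, so the isomorphism \eqref{eq:action-of-f} is already an automorphism of $\mathcal{V}_n(V)$ with no separate untwisting isomorphism required. I would also check that composition in the groupoid matches composition of mapping classes, so that the resulting assignment $\mathrm{Chill}(\Sigma) \to \mathrm{Aut}_R(\mathcal{V}_n(V))$ is multiplicative; this is immediate from Definition \ref{def:action-groupoid}.
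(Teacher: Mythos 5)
Your proposal is correct and follows essentially the same route as the paper: the paper's proof likewise observes that, by Proposition \ref{kernel_of_Psi}, every $f \in \mathrm{Chill}(\Sigma)$ satisfies $f_\Heis = \id$, so the morphism \eqref{eq:action-of-f} with $\tau = \id$ is already an automorphism of $\mathcal{V}_n(V)$, and the representation is the restriction of the functor \eqref{eq:twisted-representation} to the endomorphisms of that single object. Your more explicit identification of $\mathrm{Aut}_{\mathrm{Ac}}(\id)$ with $\ker(\Psi)$ is just a spelled-out version of the same argument.
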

\begin{proof}
This follows from the construction described in \S\ref{subsec:twisted-representations}, with each element $f \in \mathrm{Chill}(\Sigma)$ acting by the automorphism \eqref{eq:action-of-f} (setting $\tau = \mathrm{id}$), since we know from Proposition \ref{kernel_of_Psi} that $f_\Heis = \Psi(f) = \mathrm{id}$ for each $f \in \mathrm{Chill}(\Sigma)$.
\end{proof}

\section{Representations of the Torelli group}
\label{s:Torelli}

We now restrict to the Torelli group $\mathfrak{T}(\Sigma) \subseteq \mathfrak{M}(\Sigma)$. By Proposition \ref{Psi_inner}, the Torelli group acts by inner automorphisms under $\Psi$, so we have a homomorphism
\begin{equation}
\label{eq:Psi-restricted-to-Torelli}
\Psi \colon \mathfrak{T}(\Sigma) \longrightarrow \mathrm{Inn}(\Heis) \subset \mathrm{Aut}(\Heis).
\end{equation}
We may therefore pull back the $\Z$-central extension
\begin{equation}
\label{eq:Heisenberg-extension}
1 \to \Z \cong \mathcal{Z}(\Heis) \longrightarrow \Heis \longrightarrow \mathrm{Inn}(\Heis) \to 1
\end{equation}
along \eqref{eq:Psi-restricted-to-Torelli} to obtain a $\Z$-central extension
\begin{equation}
\label{eq:Torelli-extension}
1 \to \Z \longrightarrow \widetilde{\mathfrak{T}}(\Sigma) \longrightarrow \mathfrak{T}(\Sigma) \to 1
\end{equation}
and a homomorphism
\begin{equation}
\label{eq:Psi-tilde}
\widetilde{\Psi} \colon \widetilde{\mathfrak{T}}(\Sigma) \longrightarrow \Heis
\end{equation}
lifting \eqref{eq:Psi-restricted-to-Torelli}.

\begin{remark}
\label{rmk:cocycle-for-ext-of-Torelli}
The inner automorphism group $\mathrm{Inn}(\Heis)$ naturally identifies with the first homology group $H = H_1(\Sigma;\Z)$. Under this identification, \eqref{eq:Psi-restricted-to-Torelli} becomes the crossed homomorphism $\delta \colon \mathfrak{T}(\Sigma) \to H^*$ (which is a \emph{homomorphism} on the Torelli group) composed with the Poincaré duality isomorphism $H^* \cong H$. A $2$-cocycle representing the extension \eqref{eq:Heisenberg-extension} is the intersection form $.$ on $H$. A $2$-cocycle representing \eqref{eq:Torelli-extension} is therefore given by $(f,f') \mapsto \delta(f)^\sharp . \delta(f')^\sharp$, where $(\phantom{-})^\sharp$ denotes Poincaré duality. As mentioned above, we showed in \cite{HeisenbergHomology} that $\delta = \mathfrak{d}$, so this $2$-cocycle may also be written as $(f,f') \mapsto \mathfrak{d}(f)^\sharp . \mathfrak{d}(f')^\sharp$.
\end{remark}

Recall from \S\ref{subsec:review} that, for any representation $V$ of $\Heis$, the Heisenberg homology module $H_n^{BM}\bigl(\mathcal{C}_{n}(\Sigma),\mathcal{C}_{n}(\Sigma,\partial^-(\Sigma)) ;V\bigr)$ is obtained from the singular chain complex $\mathcal{S}_*(\widetilde{\mathcal{C}}_n(\Sigma))$ of the Heisenberg covering $\widetilde{\mathcal{C}}_n(\Sigma)$ by:
\begin{itemize}
\item tensoring over $\Z[\Heis]$ with $V$ \eqref{eq:Local};
\item taking the quotient of this complex by the subcomplex corresponding to the subspace $\mathcal{C}_{n}(\Sigma,\partial^-(\Sigma)) \cup (\mathcal{C}_{n}(\Sigma)\setminus T)$, for each compact $T \subset \mathcal{C}_n(\Sigma)$;
\item passing to homology and then taking the inverse limit over $T$ \eqref{eq:relativeBorelMoore}.
\end{itemize}
The isomorphism \eqref{eq:action-of-f} (for $\tau = (f_\Heis)^{-1}$) is induced by the natural twisted action of $f \in \mathfrak{M}(\Sigma)$ on $\mathcal{S}_*(\widetilde{\mathcal{C}}_n(\Sigma))$, which is of the form
\begin{equation}
\label{eq:action-on-singular-complex}
\mathcal{S}_*(\widetilde{\mathcal{C}}_n(\Sigma)) \longrightarrow \mathcal{S}_*(\widetilde{\mathcal{C}}_n(\Sigma))_{f_\Heis}.
\end{equation}

Now, for an element $h \in \Heis$, let us denote by $c_h = h{-}h^{-1}$ the corresponding inner automorphism $c_h \in \mathrm{Inn}(\Heis)$. One may verify that the isomorphism
\begin{equation}
\label{eq:Torelli-step-1}
- \cdot h \colon \mathcal{S}_* (\widetilde{\mathcal{C}}_n(\Sigma))_{c_h} \longrightarrow \mathcal{S}_* (\widetilde{\mathcal{C}}_n(\Sigma))
\end{equation}
of singular chain complexes given by the right-action of $h$ is $\Z[\Heis]$-linear. For each $\widetilde{f} \in \widetilde{\mathfrak{T}}(\Sigma)$, we may then take the composition
\begin{equation}
\label{eq:Torelli-composed}
\begin{tikzcd}
\mathcal{S}_*(\widetilde{\mathcal{C}}_n(\Sigma)) \ar[r,"{\eqref{eq:action-on-singular-complex}}"] & \mathcal{S}_* (\widetilde{\mathcal{C}}_n(\Sigma))_{f_\Heis} \ar[r,"{\eqref{eq:Torelli-step-1}}"] & \mathcal{S}_*(\widetilde{\mathcal{C}}_n(\Sigma)) \ ,
\end{tikzcd}
\end{equation}
where $f$ denotes the projection of $\widetilde{f}$ to $\mathfrak{T}(\Sigma)$ and we set $h = \widetilde{\Psi}(\widetilde{f}) \in \Heis$. The fact that $f_\Heis = c_h$ follows from the fact that \eqref{eq:Psi-tilde} is a lift of \eqref{eq:Psi-restricted-to-Torelli}. This defines an untwisted, $\Z[\Heis]$-linear action of $\widetilde{\mathfrak{T}}(\Sigma)$ on the singular chain complex $\mathcal{S}_*(\widetilde{\mathcal{C}}_n(\Sigma))$. By the construction recalled above, this in turn induces an untwisted, $R$-linear action of $\widetilde{\mathfrak{T}}(\Sigma)$ on Heisenberg homology:
\begin{equation}
\label{eq:representation-Torelli-extension}
\widetilde{\mathfrak{T}}(\Sigma) \longrightarrow \mathrm{Aut}_R \bigl( H_n^{BM} \bigl( \mathcal{C}_n(\Sigma) , \mathcal{C}_{n}(\Sigma,\partial^-(\Sigma)) ; V \bigr) \bigr) .
\end{equation}

To complete the construction, we show that:

\begin{lemma}
\label{lem:trivial-extension-of-Torelli}
The central extension $\widetilde{\mathfrak{T}}(\Sigma)$ of $\mathfrak{T}(\Sigma)$ is trivial, i.e.~it is isomorphic to the product $\mathfrak{T}(\Sigma) \times \Z$.
\end{lemma}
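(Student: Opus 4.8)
The plan is to show that the class of the central extension \eqref{eq:Torelli-extension} in $H^2(\mathfrak{T}(\Sigma);\Z)$ vanishes. By Remark \ref{rmk:cocycle-for-ext-of-Torelli} this class is represented by the $2$-cocycle $c(f,f') = \delta(f)^\sharp.\delta(f')^\sharp$, so triviality of $\widetilde{\mathfrak{T}}(\Sigma)$ is equivalent to exhibiting $c$ as a coboundary, equivalently to lifting the homomorphism $\Psi\colon\mathfrak{T}(\Sigma)\to\mathrm{Inn}(\Heis)$ through the central extension \eqref{eq:Heisenberg-extension} to a genuine homomorphism $\mathfrak{T}(\Sigma)\to\Heis$. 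The first point I would record is that $c$ is the pullback, along the homomorphism $\delta^\sharp\colon\mathfrak{T}(\Sigma)\to H$ (the Chillingworth homomorphism followed by Poincar\'e duality), of the intersection-form cocycle $(x,y)\mapsto x.y$ on $H$ representing \eqref{eq:Heisenberg-extension}. Since $\delta^\sharp$ is an honest homomorphism on $\mathfrak{T}(\Sigma)$ with kernel $\mathrm{Chill}(\Sigma)$ (Proposition \ref{kernel_of_Psi}), the class $[c]$ is \emph{inflated} from the abelian quotient $\mathfrak{T}(\Sigma)/\mathrm{Chill}(\Sigma)\cong Q$, where $Q = \delta^\sharp(\mathfrak{T}(\Sigma))\subseteq H$.

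Next I would feed this into the five-term exact sequence of $1\to\mathrm{Chill}(\Sigma)\to\mathfrak{T}(\Sigma)\to Q\to1$. Writing $\hat\omega\in H^2(Q;\Z)$ for the class of the restricted intersection-form cocycle, we have $[c] = \mathrm{inf}(\hat\omega)$, so the vanishing of $[c]$ is equivalent to $\hat\omega$ lying in the image of the transgression $H^1(\mathrm{Chill}(\Sigma);\Z)^{Q}\to H^2(Q;\Z)$. Dualizing via the universal coefficient theorem (note $Q$ is free abelian, so $H_2(Q)=\wedge^2 Q$ and $\hat\omega$ is literally a functional on $\wedge^2 Q$), this becomes the cleaner criterion that the intersection form annihilates the image of $(\delta^\sharp)_*\colon H_2(\mathfrak{T}(\Sigma))\to\wedge^2 Q$. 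The $\mathrm{Ext}$-part of $[c]$ causes no trouble: because $c$ is alternating it restricts trivially to every cyclic subgroup, so its image in $\mathrm{Ext}(H_1(\mathfrak{T}(\Sigma)),\Z)$ is zero. Thus the lemma reduces to showing that $\mathrm{im}\bigl(H_2(\mathfrak{T}(\Sigma))\to\wedge^2 Q\bigr)$ is isotropic for the intersection form.

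To establish this isotropy I would bring in Johnson's theory: the Chillingworth homomorphism $\delta^\sharp$ factors as the Johnson homomorphism $\tau\colon\mathfrak{T}(\Sigma)\to\wedge^3 H$ followed by a symplectic contraction $\wedge^3 H\to H$, which identifies $Q$ and reduces the required pairing to the finitely generated free abelian group $\wedge^3 H$. The most hands-on route is then to construct the lift directly on Johnson's generating set of genus-one bounding-pair maps: lift each generator $f$ to any element of $\Heis$ over $\delta^\sharp(f)$ and verify the defining relations, which by Hopf's formula is exactly the assertion that $[c]$ annihilates $H_2(\mathfrak{T}(\Sigma))$. The key geometric input is that $\delta^\sharp(f).\delta^\sharp(f') = 0$ whenever $f,f'$ commute: for disjointly supported bounding-pair maps their Chillingworth classes are represented on disjoint subsurfaces and hence have vanishing intersection number, which already forces the image of the central $\Z$ to survive in $\widetilde{\mathfrak{T}}(\Sigma)^{\mathrm{ab}}$. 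The main obstacle, and where the genuine work lies, is upgrading this from commuting (torus) classes to \emph{all} of $H_2(\mathfrak{T}(\Sigma))$; this step is Johnson-theoretic rather than formal, as it requires controlling the image of the second homology of the Torelli group in $\wedge^2 H$ and is not deducible from homological algebra alone.
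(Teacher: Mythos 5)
Your reduction is sound as far as it goes: the cocycle $c'(f,f') = \delta(f)^\sharp.\delta(f')^\sharp$ is indeed inflated along the homomorphism $\delta^\sharp\colon \mathfrak{T}(\Sigma)\to H$ from the intersection-form cocycle, so $[c']=0$ if and only if (modulo the $\mathrm{Ext}$ term, which you dispatch rather loosely) the functional $\hat\omega\circ(\delta^\sharp)_*$ vanishes on $H_2(\mathfrak{T}(\Sigma);\Z)$. But this is exactly where the proposal stops being a proof: you verify the vanishing only on toral classes arising from commuting, disjointly supported bounding-pair maps, and you say yourself that upgrading this to all of $H_2(\mathfrak{T}(\Sigma))$ ``is not deducible from homological algebra alone'' and is where ``the genuine work lies''. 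That genuine work \emph{is} the content of the lemma, and it is not supplied. Since $H_2$ of the Torelli group is not generated by toral classes and its structure is famously difficult to control, what you have is an honest reduction to an unproved (and a priori hard) statement, not a proof.

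The paper's argument sidesteps this entirely by going in the opposite direction: instead of pushing the cocycle down to the abelian quotient of $\mathfrak{T}(\Sigma)$, it \emph{extends} it to the full mapping class group. Morita's cocycle $c(f,f') = \mathfrak{d}(f^{-1})^\sharp.\mathfrak{d}(f')^\sharp$ is a genuine $2$-cocycle on $\mathfrak{M}(\Sigma)$ and restricts to $-c'$ on the Torelli group (where $f_*=\mathrm{id}$), so $\widetilde{\mathfrak{T}}(\Sigma)$ is the restriction of a $\Z$-central extension of $\mathfrak{M}(\Sigma)$. For $g\geq 3$ the surjection $\mathfrak{M}(\Sigma)\twoheadrightarrow Sp(H)$ induces an isomorphism on $H^2(-;\Z)$, hence the restriction map $H^2(\mathfrak{M}(\Sigma);\Z)\to H^2(\mathfrak{T}(\Sigma);\Z)$ is zero and every such extension trivialises on $\mathfrak{T}(\Sigma)$; small genus is then handled by pulling back along the stabilisation map using Lemma \ref{lem:Morita-crossed-hom-stabilisation}. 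Note that the isotropy statement you would need is essentially equivalent to this vanishing of the restriction map on the relevant class, so to complete your route you would end up importing the same input (or performing a genuinely new computation with $H_2$ of the Torelli group).
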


\begin{theorem}
\label{thm:Torelli}
Associated to any representation $V$ of $\Heis$ over $R$ and any integer $n\geq 2$, there is a well-defined representation of the Torelli group
\begin{equation}
\label{eq:representation-Torelli}
\mathfrak{T}(\Sigma) \longrightarrow \mathrm{Aut}_R \bigl( H_n^{BM} \bigl( \mathcal{C}_n(\Sigma) , \mathcal{C}_n(\Sigma,\partial^-(\Sigma)) ; V \bigr) \bigr)
\end{equation}
that lifts a projective action of $\mathfrak{T}(\Sigma)$ on this homology module.
\end{theorem}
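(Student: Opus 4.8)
The plan is to assemble the pieces already in place. From the discussion preceding the statement we have a genuine, untwisted linear representation \eqref{eq:representation-Torelli-extension} of the central extension $\widetilde{\mathfrak{T}}(\Sigma)$; the only thing standing between this and a representation of $\mathfrak{T}(\Sigma)$ itself is the central $\Z$. I would invoke Lemma \ref{lem:trivial-extension-of-Torelli}, which asserts that the extension \eqref{eq:Torelli-extension} is trivial, to fix a group-theoretic section $s \colon \mathfrak{T}(\Sigma) \to \widetilde{\mathfrak{T}}(\Sigma)$, and then \emph{define} the representation \eqref{eq:representation-Torelli} to be the composite of $s$ with \eqref{eq:representation-Torelli-extension}. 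Since $s$ is a homomorphism and \eqref{eq:representation-Torelli-extension} is a homomorphism, so is the composite; this immediately produces the required genuine (untwisted) linear representation of $\mathfrak{T}(\Sigma)$.

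It then remains to explain in what sense this lifts a projective action. First I would describe the projective action intrinsically, without reference to the extension. For each $f \in \mathfrak{T}(\Sigma)$, Proposition \ref{Psi_inner} gives $f_\Heis = c_h$ for some $h \in \Heis$, and this $h$ is unique up to the centre $\mathcal{Z}(\Heis) = \langle u \rangle$. Feeding any such $h$ into the composition \eqref{eq:Torelli-composed} yields an automorphism of $\mathcal{V}_n(V)$, and replacing $h$ by $u^{k} h$ changes this automorphism precisely by the central action of $u^{k}$, namely by $\rho(u)^{k}$ on the coefficients $V$. Because $u$ is central in $\Heis$, the operator $\rho(u)$ commutes with the entire image of the action; hence the assignment of $f$ to its automorphism is well defined modulo $\langle \rho(u) \rangle$ and gives the asserted projective action. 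The linear representation built from the section $s$ lifts this projective action, since for each $f$ the element $\widetilde{\Psi}(s(f)) \in \Heis$ is one of the admissible choices of $h$.

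The one point requiring care — and the crux of why the statement holds — is the identification of how the central $\Z$ of $\widetilde{\mathfrak{T}}(\Sigma)$ acts under \eqref{eq:representation-Torelli-extension}. Here I would use that \eqref{eq:Torelli-extension} is the pullback of the Heisenberg central extension \eqref{eq:Heisenberg-extension} along \eqref{eq:Psi-restricted-to-Torelli}, so that the lift \eqref{eq:Psi-tilde} carries this central $\Z$ isomorphically onto $\mathcal{Z}(\Heis) = \langle u \rangle$. Evaluating \eqref{eq:Torelli-composed} on a central element, so that $f = \mathrm{Id}$ and $h = u^{k}$, shows that it acts simply by right-multiplication by $u^{k}$, which on homology is $\rho(u)^{k}$. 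This confirms both that the representation genuinely descends to the stated projective action and that any two choices of section $s$ yield linear representations differing only by the central action $\rho(u)$, so that each is a valid lift of the same projective action. No step beyond this bookkeeping is needed, the substantive content having already been absorbed into Theorem \ref{thm:twisted-representation} and the triviality result Lemma \ref{lem:trivial-extension-of-Torelli}.
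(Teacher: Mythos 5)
Your proposal is correct and follows essentially the same route as the paper: restrict the untwisted representation \eqref{eq:representation-Torelli-extension} of $\widetilde{\mathfrak{T}}(\Sigma)$ along a section of the central extension, whose existence is exactly Lemma \ref{lem:trivial-extension-of-Torelli}, and observe that the central $\Z$ acts through $\rho(u)$, hence centrally, so that the result lifts a projective action of $\mathfrak{T}(\Sigma)$. Your explicit identification of the central action as $\rho(u)^k$ and of the ambiguity in the choice of $h$ is a slightly more detailed bookkeeping of the same argument, not a different approach.
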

\begin{proof}
Let us abbreviate $\mathcal{V}_n(V) = H_n^{BM} (\mathcal{C}_n(\Sigma) , \mathcal{C}_n(\Sigma,\partial^-(\Sigma)) ; V)$. The group homomorphism \eqref{eq:representation-Torelli-extension} must send the subgroup $\Z \subset \widetilde{\mathfrak{T}}(\Sigma)$ (the kernel of the central extension of $\mathfrak{T}(\Sigma)$) to the centre of $\mathrm{Aut}_R(\mathcal{V}_n(V))$, so it descends to
\begin{equation}
\mathfrak{T}(\Sigma) \longrightarrow \mathrm{PAut}_R(\mathcal{V}_n(V)),
\end{equation}
where the projective automorphism group $\mathrm{PAut}_R(A)$ of an $R$-module $A$ is the quotient of $\mathrm{Aut}_R(A)$ by its centre. Note that the centre of $\mathrm{Aut}_R(A)$ is equal to $\{ -\cdot\lambda \mid \lambda \in \mathcal{Z}(R^\times) \}$ when $A$ is a free $R$-module, but may be larger when $A$ is not free. This is a projective action of the Torelli group. To lift it to a linear action, we compose \eqref{eq:representation-Torelli-extension} with any section of the central extension $\widetilde{\mathfrak{T}}(\Sigma) \to \mathfrak{T}(\Sigma)$, which exists by Lemma \ref{lem:trivial-extension-of-Torelli}.
\end{proof}

To prove Lemma \ref{lem:trivial-extension-of-Torelli}, we will need a lemma describing the behaviour of the crossed homomorphism $\mathfrak{d}$ with respect to increasing genus. Consider the inclusion of surfaces $\Sigma_{g,1} \subseteq \Sigma_{h,1}$ given by boundary connected sum with $\Sigma_{h-g,1}$. This induces an inclusion of mapping class groups
\begin{equation}
\label{eq:inclusion-of-MCG}
\mathfrak{M}(\Sigma_{g,1}) \longhookrightarrow \mathfrak{M}(\Sigma_{h,1})
\end{equation}
by extending diffeomorphisms by the identity on $\Sigma_{h-g,1}$.

\begin{lemma}[{\cite[\S 5.2]{HeisenbergHomology}}]
\label{lem:Morita-crossed-hom-stabilisation}
The diagram
\begin{equation}
\label{eq:Morita-crossed-hom-stabilisation}
\begin{tikzcd}
\mathfrak{M}(\Sigma_{g,1}) \ar[rr,hook,"\eqref{eq:inclusion-of-MCG}"] \ar[d,"\mathfrak{d}",swap] && \mathfrak{M}(\Sigma_{h,1}) \ar[d,"\mathfrak{d}"] \\
H^1(\Sigma_{g,1};\Z) \ar[rr] && H^1(\Sigma_{h,1};\Z)
\end{tikzcd}
\end{equation}
commutes, where the bottom arrow is the map induced by the inclusion $\Sigma_{g,1} \hookrightarrow \Sigma_{h,1}$ on $H_1(-;\Z)$, conjugated by Poincar{\'e} duality.
\end{lemma}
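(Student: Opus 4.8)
The plan is to work directly with the action-theoretic description $\mathfrak{d} = \delta$ of Morita's crossed homomorphism, where $\delta_f(x) = \mathrm{pr}_1(f_\Heis(0,x))$ is read off from the automorphism $f_\Heis$ of Proposition~\ref{f_Heisenberg}. Write $\iota \colon \Sigma_{g,1} \hookrightarrow \Sigma_{h,1}$ for the boundary-connected-sum inclusion and $\bar f \in \mathfrak{M}(\Sigma_{h,1})$ for the image of $f$ under \eqref{eq:inclusion-of-MCG}, i.e.\ $f$ extended by the identity on $\Sigma_{h-g,1}$. The inclusion realises $H_1(\Sigma_{h,1};\Z)$ as the orthogonal direct sum, with respect to the intersection form, of $\iota_* H_1(\Sigma_{g,1};\Z)$ and $H_1(\Sigma_{h-g,1};\Z)$, and on the corresponding symplectic bases the bottom arrow of \eqref{eq:Morita-crossed-hom-stabilisation} --- the inclusion on $H_1$ conjugated by Poincar\'e duality --- is simply the ``extension by zero'' map $\mathrm{Hom}(H_1(\Sigma_{g,1}),\Z) \to \mathrm{Hom}(H_1(\Sigma_{h,1}),\Z)$. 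A short calculation confirms this, the signs introduced by Poincar\'e duality cancelling in the composition, independently of the chosen sign convention. It therefore suffices to prove that $\delta_{\bar f}$ is the extension by zero of $\delta_f$.

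First I would note that the inclusion of configuration spaces $\mathcal{C}_n(\iota)$ intertwines $\mathcal{C}_n(\bar f)$ with $\mathcal{C}_n(f)$, since $\bar f \circ \iota = \iota \circ f$; choosing the basepoint configurations compatibly in the shared boundary interval $\partial^-$, this yields a commuting square of braid-group automorphisms and hence, passing to Heisenberg quotients via Proposition~\ref{hom_phi}, a commuting square relating $f_\Heis$ and $\bar f_\Heis$ through the inclusion $\iota_\Heis \colon (k,x) \mapsto (k,\iota_* x)$. Comparing first coordinates in this square on an element $(0,x)$ with $x \in H_1(\Sigma_{g,1};\Z)$ gives $\delta_{\bar f}(\iota_* x) = \delta_f(x)$; that is, $\delta_{\bar f}$ restricts to $\delta_f$ on the summand $\iota_* H_1(\Sigma_{g,1};\Z)$.

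Second I would show that $\delta_{\bar f}$ vanishes on the complementary summand $H_1(\Sigma_{h-g,1};\Z)$. The generators $\alpha_r, \beta_s$ with $r,s > g$ may be represented by loops, with only the first configuration point moving, lying in $\Sigma_{h-g,1}$ and disjoint from the support of $\bar f$, exactly as in the disjointness argument in the proof of Lemma~\ref{lem:coincide}. Consequently $\bar f_\Heis$ fixes $\tilde a_r = (0,a_r)$ and $\tilde b_s = (0,b_s)$ for $r,s > g$, whence $\delta_{\bar f}(a_r) = \delta_{\bar f}(b_s) = 0$. Combining the two computations shows that $\delta_{\bar f}$ is precisely the extension by zero of $\delta_f$, which by the first paragraph is the value of the bottom arrow on $\delta_f = \mathfrak{d}(f)$, proving commutativity.

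I expect the main obstacle to be the careful bookkeeping in the first step: ensuring that the basepoints and the boundary interval $\partial^-$ can be arranged compatibly under the boundary connected sum so that the square of braid-group automorphisms genuinely commutes, and that the induced map on Heisenberg quotients is the expected inclusion $\iota_\Heis$. The geometric disjointness argument of the second step and the Poincar\'e-duality linear algebra of the reduction are comparatively routine.
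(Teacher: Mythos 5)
Your argument is correct. Note first that the paper does not actually prove this lemma: it is imported wholesale from \cite[\S 5.2]{HeisenbergHomology}, so there is no in-paper proof to compare against line by line. Your proof is a legitimate, essentially self-contained derivation using only facts that the present paper does recall: the identification $\mathfrak{d}=\delta$, the description \eqref{eq:action} of $f_\Heis$ via the splitting of \eqref{eq:Aut-Heis-ses}, and Propositions \ref{hom_phi} and \ref{f_Heisenberg}. The three steps all check out. The reduction of the bottom arrow to ``extension by zero'' is right because $\iota_*$ preserves the intersection form and $H_1(\Sigma_{h,1};\Z)$ splits orthogonally as $\iota_*H_1(\Sigma_{g,1};\Z)\oplus H_1(\Sigma_{h-g,1};\Z)$, so $(\iota_*c^\sharp).(\iota_*y)=c^\sharp.y$ and $(\iota_*c^\sharp).z=0$ for $z$ in the complement, with the same duality convention appearing once and being inverted once. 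The naturality square $\bar f_\Heis\circ\iota_\Heis=\iota_\Heis\circ f_\Heis$ follows from $\bar f\circ\iota=\iota\circ f$ together with surjectivity of $\phi$ and the uniqueness clause of Proposition \ref{f_Heisenberg}; here one should observe (as you implicitly do) that $\iota_\Heis\colon(k,x)\mapsto(k,\iota_*x)$ is a homomorphism precisely because $\iota_*$ preserves the intersection form, and that the other $n-1$ configuration points sit in $\partial\Sigma$, which $\bar f$ fixes pointwise, so the disjointness argument for the generators $\alpha_r,\beta_s$ with $r,s>g$ applies to the full braids and not just to the underlying loops. The only caveat worth flagging is that your proof establishes the statement for $\delta$; its validity for Morita's combinatorially defined $\mathfrak{d}$ rests on the equality $\delta=\mathfrak{d}$ on all of $\mathfrak{M}(\Sigma)$, which is itself a result of \cite{HeisenbergHomology} rather than something proved here -- but since the paper asserts and uses that identification freely, this is an acceptable dependency.
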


\begin{proof}[Proof of Lemma \ref{lem:trivial-extension-of-Torelli}]
We begin by showing that it suffices to prove the statement for all sufficiently large $g$; we will then be able to assume $g\geq 3$ in the rest of the proof. For $g<h$, consider the inclusion of Torelli groups
\begin{equation}
\label{eq:stabilisation-Torelli}
\iota \colon \mathfrak{T}(\Sigma_{g,1}) \lhook\joinrel\longrightarrow \mathfrak{T}(\Sigma_{h,1}).
\end{equation}
We claim that the pullback of the central extension $\widetilde{\mathfrak{T}}(\Sigma_{h,1})$ along \eqref{eq:stabilisation-Torelli} is $\widetilde{\mathfrak{T}}(\Sigma_{g,1})$. To see this, recall from Remark \ref{rmk:cocycle-for-ext-of-Torelli} that the central extension $\widetilde{\mathfrak{T}}(\Sigma_{g,1})$ of $\mathfrak{T}(\Sigma_{g,1})$ is represented by the $2$-cocycle $(f,f') \mapsto \mathfrak{d}(f)^\sharp . \mathfrak{d}(f')^\sharp$. Similarly, the pullback of the central extension $\widetilde{\mathfrak{T}}(\Sigma_{h,1})$ of $\mathfrak{T}(\Sigma_{h,1})$ along the inclusion \eqref{eq:stabilisation-Torelli} is represented by the $2$-cocycle $(f,f') \mapsto \mathfrak{d}(\iota(f))^\sharp . \mathfrak{d}(\iota(f'))^\sharp$. Lemma \ref{lem:Morita-crossed-hom-stabilisation}, together with the fact that the map $H_1(\Sigma_{g,1};\Z) \to H_1(\Sigma_{h,1};\Z)$ preserves the intersection form, implies that these $2$-cocycles are equal. Thus triviality of $\widetilde{\mathfrak{T}}(\Sigma_{h,1})$ will imply triviality of $\widetilde{\mathfrak{T}}(\Sigma_{g,1})$ for any $g<h$. For the remainder of this proof, we assume that $g\geq 3$ and abbreviate $\Sigma_{g,1}$ to $\Sigma$, as usual.

By \cite[Lemma A.1(xiii)]{BensonCampagnoloRanickiRovi2020} and homological stability \cite[Theorem 1.2]{Wahl2013}, the canonical surjection $\mathfrak{M}(\Sigma) \twoheadrightarrow Sp(H)$ induces an isomorphism on $H^2(-;\Z)$ when $g\geq 3$. It follows that the inclusion $\mathfrak{T}(\Sigma) \hookrightarrow \mathfrak{M}(\Sigma)$ induces the trivial map on $H^2(-;\Z)$. This means that every $\Z$-central extension of $\mathfrak{M}(\Sigma)$ becomes trivial when restricted to $\mathfrak{T}(\Sigma)$. To prove the lemma, it will therefore suffice to show that $\widetilde{\mathfrak{T}}(\Sigma)$ is the restriction of a $\Z$-central extension defined on the whole mapping class group $\mathfrak{M}(\Sigma)$. Recall from Remark \ref{rmk:cocycle-for-ext-of-Torelli} that the central extension $\widetilde{\mathfrak{T}}(\Sigma)$ of $\mathfrak{T}(\Sigma)$ is represented by the $2$-cocycle $c'$ given by $c'(f,f') = \mathfrak{d}(f)^\sharp . \mathfrak{d}(f')^\sharp$. We therefore just have to show that the $2$-cocycle $c'$ extends to $\mathfrak{M}(\Sigma)$.

Now, there is a $2$-cocycle $c$ on $\mathfrak{M}(\Sigma)$, defined by Morita~\cite{Morita1989II}, given by the formula $c(f,f') = \mathfrak{d}(f^{-1})^\sharp . \mathfrak{d}(f')^\sharp$. By general properties of crossed homomorphisms, we have $\mathfrak{d}(f^{-1})^\sharp = -f_*^{-1}(\mathfrak{d}(f)^\sharp)$, and so we may rewrite this as
\begin{equation}
\label{eq:Morita-cocycle-rewritten}
c(f,f') = -f_*^{-1}(\mathfrak{d}(f)^\sharp) . \mathfrak{d}(f')^\sharp = -\mathfrak{d}(f)^\sharp . f_*(\mathfrak{d}(f')^\sharp),
\end{equation}
where for the second equality we have used the fact that the automorphism $f_*$ of $H$ preserves the intersection form. Restricted to the Torelli group, we have $f_* = \mathrm{id}$, so $c(f,f') = -\mathfrak{d}(f)^\sharp . \mathfrak{d}(f')^\sharp = -c'(f,f')$ for $f,f' \in \mathfrak{T}(\Sigma)$. Thus the $2$-cocycle $c'$ extends to the $2$-cocycle $-c$ on $\mathfrak{M}(\Sigma)$.
\end{proof}

\section{Representations of Earle-Morita subgroups}
\label{s:Morita}

We now identify another large subgroup of the mapping class group $\mathfrak{M}(\Sigma)$ on which we may construct linear representations without passing to a central extension. We will do this in the setting where we take coefficients in the \emph{Schrödinger representation} of $\Heis$ and the subgroup under consideration is the \emph{Earle-Morita subgroup} of $\mathfrak{M}(\Sigma)$.

Recall from \S\ref{subsec:automorphisms-of-Heis} that the \emph{Earle-Morita subgroup} $\mathrm{Mor}(\Sigma) \subseteq \mathfrak{M}(\Sigma)$ is the kernel of the crossed homomorphism $\mathfrak{d} \colon \mathfrak{M}(\Sigma) \to H^1(\Sigma;\Z)$ defined by Morita \cite{Morita1989}, and that this crossed homomorphism coincides with the one associated to the action $\mathfrak{M}(\Sigma) \to \mathrm{Aut}^+(\Heis) \cong Sp(H) \ltimes H^1(\Sigma;\Z)$ from Proposition \ref{f_Heisenberg}. We also recall, from Remark \ref{rmk:Earle-Morita-choices}, that this crossed homomorphism, and its kernel $\mathrm{Mor}(\Sigma)$, depend on the parametrisation of the surface $\Sigma$.

An important representation of the Heisenberg group is the \emph{Schrödinger representation}, which is parametrised by a non-zero real number $\hbar$ (called the Planck constant). It is given by the right action $\Pi_{\hbar}$ of $\Heis$ on the Hilbert space $W := L^{2}(\R^{g})$ determined by the following formula:

\begin{equation}
\label{eq:Schroedinger-formula}
\left[\Pi_{\hbar}\left(k,x=\sum_{i=1}^g p_i a_i + q_i b_i \right) \psi \right](s)=e^{i\hbar \frac{k-p\cdot q}{2}}e^{i\hbar p\cdot s}\psi (s-q) .
\end{equation}

In fact, this is an action of the \emph{continuous} Heisenberg group $\Heisr$, which is the central extension of $H_\R := H_1(\Sigma;\R)$ by $\R$ corresponding to the intersection form. There is a natural inclusion $\Heis \subset \Heisr$. The Schrödinger representation is a unitary action on $W = L^{2}(\R^{g})$, so it may be written as
\begin{equation}
\label{eq:Schroedinger-rep}
\Pi_{\hbar} \colon \Heisr \longrightarrow U(W) .
\end{equation}

We recall also that the group $\mathrm{Aut}^+(\Heisr)$ of automorphisms acting trivially on the centre of $\Heisr$ decomposes as $\mathrm{Aut}^+(\Heisr) \cong Sp(H_\R) \ltimes H^1(\Sigma;\R)$, similarly to the decomposition of $\mathrm{Aut}^+(\Heis)$ described in \S\ref{subsec:automorphisms-of-Heis}. From these decompositions we see that there is a natural inclusion $\mathrm{Aut}^+(\Heis) \subset \mathrm{Aut}^+(\Heisr)$.

\subsection{Untwisting on a central extension of the mapping class group.}
\label{subsec:untwisting-Schroedinger}

We first recall from \cite[\S 5]{HeisenbergHomology} how to untwist the twisted representation \eqref{eq:twisted-representation} on the \emph{stably universal central extension} of $\mathfrak{M}(\Sigma)$ when the $\Heis$-representation $V$ is the Schrödinger representation. In \S\ref{subsec:metaplectic}--\S\ref{subsec:untwisting-on-Earle-Morita} we then explain how to untwist on the Earle-Morita subgroup \emph{without} passing to a central extension.

As recalled in \cite[\S 5.1]{HeisenbergHomology}, an immediate corollary of the \emph{Stone-von Neumann theorem} (see for example \cite[p.~19]{LionVergne}) is the following.

\begin{corollary}[of the Stone-von Neumann theorem]
\label{coro:Stone-von-Neumann}
Fix a positive real number $\hbar$ and let $\rho \colon \Heisr \to U(W)$ be an irreducible unitary representation whose restriction to the centre $\R \subset \Heisr$ is given by $\rho(t,0) = e^{\hbar it/2}.\mathrm{id}_W$. Then there is an element $u \in U(W)$, unique up to rescaling by an element of $S^1$, such that $\rho = u . \Pi_{\hbar} . u^{-1}$.
\end{corollary}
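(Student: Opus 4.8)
The plan is to obtain the statement as a near-immediate consequence of the Stone–von Neumann theorem, reserving the only genuine (if short) argument for the uniqueness clause, which comes from Schur's lemma.

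First I would check that $\Pi_\hbar$ itself meets the hypotheses imposed on $\rho$, so that the two representations can legitimately be compared. Putting $x=0$ and $k=t$ in \eqref{eq:Schroedinger-formula} gives $[\Pi_\hbar(t,0)\psi](s) = e^{i\hbar t/2}\psi(s)$, so $\Pi_\hbar$ restricts to the centre $\R \subset \Heisr$ by the character $t \mapsto e^{i\hbar t/2}$, exactly as $\rho$ does; moreover $\Pi_\hbar$ is unitary and, by the standard theory underlying Stone–von Neumann, irreducible.

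For existence, both $\rho$ and $\Pi_\hbar$ are then irreducible unitary representations of $\Heisr$ agreeing on the centre with the same nontrivial character. The Stone–von Neumann theorem (see \cite[p.~19]{LionVergne}) asserts that, for a fixed nontrivial central character, any two such representations are unitarily equivalent, and this directly produces a unitary $u \in U(W)$ with $\rho = u\,\Pi_\hbar\,u^{-1}$.

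For uniqueness, suppose $u_1,u_2 \in U(W)$ both satisfy $\rho = u_i\,\Pi_\hbar\,u_i^{-1}$. Then $u_2^{-1}u_1$ commutes with $\Pi_\hbar(h)$ for every $h \in \Heisr$, so it lies in the commutant of the irreducible representation $\Pi_\hbar$; by Schur's lemma this commutant is $\C\cdot\mathrm{id}_W$, whence $u_2^{-1}u_1 = \lambda\,\mathrm{id}_W$, and unitarity forces $\lambda \in S^1$. Thus $u_1 = \lambda u_2$, which is precisely the claimed uniqueness up to rescaling by $S^1$. The only real obstacle is presentational: one must invoke a version of Stone–von Neumann stated for the \emph{continuous} Heisenberg group $\Heisr$ with this central-character normalisation, after which everything else is formal.
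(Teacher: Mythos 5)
Your argument is correct and is exactly the route the paper intends: it states this result as an immediate corollary of the Stone--von Neumann theorem (citing \cite[p.~19]{LionVergne}) without further proof, and your verification of the central character of $\Pi_\hbar$, the appeal to Stone--von Neumann for existence of $u$, and Schur's lemma for uniqueness up to $S^1$ supply precisely the omitted details. No issues.
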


In particular, we may apply this result to the representation $\rho := \Pi_{\hbar} \circ \varphi$, for any $\varphi \in \mathrm{Aut}(\Heisr)$. Sending $\varphi$ to the element $u \in U(W)/S^1 = PU(W)$ provided by Corollary \ref{coro:Stone-von-Neumann} defines a homomorphism
\begin{equation}
\label{eq:Segal-Shale-Weil-representation}
T \colon \mathrm{Aut}(\Heisr) \longrightarrow PU(W),
\end{equation}
which is the \emph{Segal-Shale-Weil projective representation}. Restricting to $\mathrm{Aut}^+(\Heis) \subset \mathrm{Aut}^+(\Heisr) \subset \mathrm{Aut}(\Heisr)$, we may compose it with the action $\Psi \colon \mathfrak{M}(\Sigma) \to \mathrm{Aut}^+(\Heis)$ from Proposition \ref{f_Heisenberg} to obtain a projective representation
\begin{equation}
\label{eq:Segal-Shale-Weil-representation-MCG}
\mathfrak{M}(\Sigma) \longrightarrow PU(W)
\end{equation}
of the mapping class group. This is the key ingredient for untwisting the twisted representations of the mapping class group constructed in \S\ref{subsec:twisted-representations}.

\begin{definition}
Let $\overline{\mathfrak{M}}(\Sigma)$ denote the central extension of $\mathfrak{M}(\Sigma)$ by $S^1$ pulled back from the central extension $U(W)$ of $PU(W)$ along \eqref{eq:Segal-Shale-Weil-representation-MCG}. By construction, the projective representation \eqref{eq:Segal-Shale-Weil-representation-MCG} lifts to a linear representation
\begin{equation}
\label{eq:Segal-Shale-Weil-representation-MCG-lifted}
\overline{\mathfrak{M}}(\Sigma) \longrightarrow U(W)
\end{equation}
on this central extension.
\end{definition}

\begin{definition}
For $g\geq 4$, the mapping class group $\mathfrak{M}(\Sigma)$ is perfect and we have $H_2(\mathfrak{M}(\Sigma);\Z) \cong \Z$, so it has a universal central extension with kernel $\Z$. Let us denote this extension by $\widetilde{\mathfrak{M}}(\Sigma)$. For $h\geq g\geq 4$, the pullback of $\widetilde{\mathfrak{M}}(\Sigma_{h,1})$ along the inclusion \eqref{eq:inclusion-of-MCG} is $\widetilde{\mathfrak{M}}(\Sigma_{g,1})$. Thus we may define, for $g\geq 1$, the \emph{stably universal central extension} $\widetilde{\mathfrak{M}}(\Sigma_{g,1})$ of $\mathfrak{M}(\Sigma_{g,1})$ to be the pullback of the universal central extension $\widetilde{\mathfrak{M}}(\Sigma_{h,1})$ of $\mathfrak{M}(\Sigma_{h,1})$ along the inclusion \eqref{eq:inclusion-of-MCG}, for any $h\geq 4$.
\end{definition}

We note that there is a canonical morphism of central extensions
\begin{equation}
\label{eq:morphism-of-extensions}
\widetilde{\mathfrak{M}}(\Sigma) \longrightarrow \overline{\mathfrak{M}}(\Sigma).
\end{equation}
When $g\geq 4$ this morphism exists and is unique by universality of $\widetilde{\mathfrak{M}}(\Sigma)$. For $g\leq 3$ it may be pulled back from the $g\geq 4$ case via the inclusion \eqref{eq:inclusion-of-MCG}, as explained in \cite[\S 5.3]{HeisenbergHomology}.

Using these ingredients, we showed in \cite[\S 5.3]{HeisenbergHomology} how to untwist the twisted representation \eqref{eq:twisted-representation} of $\mathfrak{M}(\Sigma)$ when taking coefficients in the Schrödinger representation $W$ of $\Heis$, after passing to the stably universal central extension $\widetilde{\mathfrak{M}}(\Sigma)$. Let us recall briefly how this works, following the philosophy of Remark \ref{rmk:untwisting}. In the notation of \S\ref{subsec:twisted-representations}, the construction of the twisted representation \eqref{eq:twisted-representation} provides isomorphisms
\begin{equation}
\label{eq:isomorphism-from-twisted-representation}
\mathcal{V}_n \bigl( {}_{f_\Heis} \! V \bigr) \longrightarrow \mathcal{V}_n(V)
\end{equation}
for each $f \in \mathfrak{M}(\Sigma)$. (This is simply \eqref{eq:action-of-f} with $\tau = \mathrm{id}_\Heis$.) For each $\bar{f} \in \overline{\mathfrak{M}}(\Sigma)$ lifting $f \in \mathfrak{M}(\Sigma)$, its image under \eqref{eq:Segal-Shale-Weil-representation-MCG-lifted} is a unitary automorphism of $W$ that intertwines the left Schrödinger action of $\Heis$, as long as we twist the action on the codomain by $f_\Heis$. In other words, it is a (unitary) isomorphism of left $\Heis$-representations of the form $W \cong {}_{f_\Heis} \! W$. This is a direct consequence of the defining property of the Segal-Shale-Weil projective representation from Corollary \ref{coro:Stone-von-Neumann}. This isomorphism of coefficients induces an isomorphism $\mathcal{V}_n(W) \cong \mathcal{V}_n({}_{f_\Heis} \! W)$, which we may compose with \eqref{eq:isomorphism-from-twisted-representation} (for $V=W$) to obtain:

\begin{theorem}[{\cite[\S 5]{HeisenbergHomology}}]
\label{thm:Schroedinger}
For any $n\geq 2$, there is a representation
\begin{equation}
\label{eq:MCG-representation-from-Schroedinger}
\overline{\mathfrak{M}}(\Sigma) \longrightarrow GL(\mathcal{V}_n(W))
\end{equation}
induced by the natural action of the mapping class group on \eqref{eq:Borel-Moore-homology-group} with coefficients in the Schrödinger representation $V=W$. Via the morphism \eqref{eq:morphism-of-extensions}, we may view this as a representation of the stably universal central extension of $\mathfrak{M}(\Sigma)$.
\end{theorem}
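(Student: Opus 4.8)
The plan is to assemble the representation \eqref{eq:MCG-representation-from-Schroedinger} by composing two families of isomorphisms, exactly along the lines of Remark \ref{rmk:untwisting}. First I would take the isomorphisms \eqref{eq:isomorphism-from-twisted-representation}, namely $\mathcal{V}_n({}_{f_\Heis} \! W) \to \mathcal{V}_n(W)$, supplied by the twisted-representation functor of Theorem \ref{thm:twisted-representation} (the special case $\tau = \mathrm{id}_\Heis$ of \eqref{eq:action-of-f}). On their own these are not automorphisms of a single module, so the second ingredient must be a $\Heis$-equivariant identification of the coefficients, $W \cong {}_{f_\Heis} \! W$. For this I would invoke Corollary \ref{coro:Stone-von-Neumann} applied to $\rho = \Pi_{\hbar} \circ f_\Heis$: it furnishes a unitary $u \in U(W)$, unique up to rescaling by $S^1$, conjugating $\Pi_{\hbar}$ to $\Pi_{\hbar} \circ f_\Heis$, which is precisely the datum of an isomorphism of left $\Heis$-representations $W \cong {}_{f_\Heis} \! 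W$.

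The scalar ambiguity in $u$ is exactly what obstructs a representation of $\mathfrak{M}(\Sigma)$ itself and forces the passage to a central extension. Sending $f_\Heis$ to $[u] \in PU(W)$ is the Segal-Shale-Weil projective representation \eqref{eq:Segal-Shale-Weil-representation}, hence the projective representation \eqref{eq:Segal-Shale-Weil-representation-MCG} of $\mathfrak{M}(\Sigma)$; by the very definition of $\overline{\mathfrak{M}}(\Sigma)$ as the pullback of the central extension $U(W) \to PU(W)$, each $\bar{f} \in \overline{\mathfrak{M}}(\Sigma)$ lying over $f$ carries a \emph{canonical} honest unitary $u_{\bar{f}} \in U(W)$ through the linear lift \eqref{eq:Segal-Shale-Weil-representation-MCG-lifted}, with no residual scalar freedom. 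I would then define the action of $\bar{f}$ on $\mathcal{V}_n(W)$ as the composite of the coefficient isomorphism $\mathcal{V}_n(W) \cong \mathcal{V}_n({}_{f_\Heis} \! W)$ induced by $u_{\bar{f}}$ (using functoriality of the relative Borel-Moore homology construction in the coefficient local system) with the map \eqref{eq:isomorphism-from-twisted-representation}.

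The substance of the proof is then to check that $\bar{f} \mapsto (\text{this composite})$ is a group homomorphism $\overline{\mathfrak{M}}(\Sigma) \to GL(\mathcal{V}_n(W))$, which splits into two compatibilities. On the coefficient side, \eqref{eq:Segal-Shale-Weil-representation-MCG-lifted} is a genuine homomorphism---this is precisely what the pullback central extension buys---so $u_{\bar{f}\bar{g}} = u_{\bar{f}} u_{\bar{g}}$ and the induced coefficient isomorphisms compose accordingly. On the homology side, the functoriality asserted by Theorem \ref{thm:twisted-representation} guarantees that the maps \eqref{eq:isomorphism-from-twisted-representation} compose according to the multiplication law of the action groupoid $\mathrm{Ac}(\mathfrak{M}(\Sigma) \curvearrowright \Heis)$. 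The one genuinely delicate step, which I expect to be the main obstacle, is to verify that these two composition laws interlock: the twist $f_\Heis$ sitting in the codomain of the coefficient isomorphism for $\bar{f}$ must be matched against the twist appearing in \eqref{eq:isomorphism-from-twisted-representation} for $\bar{g}$, so that the twists cancel and the composite for $\bar{f}\bar{g}$ lands back in $\mathcal{V}_n(W)$. Confirming this naturality square yields the linear representation \eqref{eq:MCG-representation-from-Schroedinger}, and precomposing with the canonical morphism of central extensions \eqref{eq:morphism-of-extensions}, $\widetilde{\mathfrak{M}}(\Sigma) \to \overline{\mathfrak{M}}(\Sigma)$, exhibits it as a representation of the stably universal central extension.
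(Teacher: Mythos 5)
Your proposal follows essentially the same route as the paper: the construction is exactly the composition of the coefficient isomorphism $\mathcal{V}_n(W) \cong \mathcal{V}_n({}_{f_\Heis}\!W)$ induced by the lifted Segal--Shale--Weil unitary $u_{\bar f}$ from \eqref{eq:Segal-Shale-Weil-representation-MCG-lifted} (whose intertwining property is the content of Corollary \ref{coro:Stone-von-Neumann}) with the twisted-representation isomorphism \eqref{eq:isomorphism-from-twisted-representation}, followed by precomposition with \eqref{eq:morphism-of-extensions}. The compatibility check you flag as the delicate step is not spelled out here either, since the theorem is quoted from \cite[\S 5]{HeisenbergHomology} where those verifications are carried out.
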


The purpose of this section is to show that, when we restrict to the Earle-Morita subgroup $\mathrm{Mor}(\Sigma) \subseteq \mathfrak{M}(\Sigma)$ and take coefficients in the Schrödinger representation $V=W$, we may obtain a representation of $\mathrm{Mor}(\Sigma)$ itself, \emph{without} passing to any central extension.

We shall do this as follows. We first recall the \emph{metaplectic extension} $\widehat{\mathfrak{M}}(\Sigma)$ of the mapping class group, which is an extension by $\Z/2$. Denoting by $\widehat{\mathrm{Mor}}(\Sigma)$ and $\overline{\mathrm{Mor}}(\Sigma)$ the restrictions of $\widehat{\mathfrak{M}}(\Sigma)$ and $\overline{\mathfrak{M}}(\Sigma)$ to the Earle-Morita subgroup, we show that $\overline{\mathrm{Mor}}(\Sigma)$ contains $\widehat{\mathrm{Mor}}(\Sigma)$ and that $\widehat{\mathrm{Mor}}(\Sigma)$ is a trivial extension. It will then follow that we may restrict \eqref{eq:MCG-representation-from-Schroedinger} to $\widehat{\mathrm{Mor}}(\Sigma) \subset \overline{\mathrm{Mor}}(\Sigma) \subset \overline{\mathfrak{M}}(\Sigma)$ and pre-compose with a section of the trivial extension $\widehat{\mathrm{Mor}}(\Sigma)$ to obtain a representation of the Earle-Morita subgroup $\mathrm{Mor}(\Sigma)$.

\subsection{Metaplectic extensions.}
\label{subsec:metaplectic}

We first consider two extensions of the symplectic group $Sp(H_\R)$.

\begin{definition}
Recall that the fundamental group of $Sp(H_\R) \cong Sp_{2g}(\R)$ is infinite cyclic. It therefore has a unique connected double covering group, which is called the \emph{metaplectic group}, denoted by $Mp(H_\R)$.
\end{definition}

\begin{definition}
Consider the restriction of the projective representation \eqref{eq:Segal-Shale-Weil-representation} to the subgroup $Sp(H_\R) \subset Sp(H_\R) \ltimes H^1(\Sigma;\Z) \cong \mathrm{Aut}^+(\Heisr) \subset \mathrm{Aut}(\Heisr)$, which is a projective representation
\begin{equation}
\label{eq:Segal-Shale-Weil-representation-restricted}
Sp(H_\R) \longrightarrow PU(W)
\end{equation}
of the symplectic group. It is in fact this restriction that is more usually referred to by the name \emph{Segal-Shale-Weil projective representation}. Denote by $\overline{Sp}(H_\R)$ the pullback of the central extension $U(W)$ of $PU(W)$. This is a central extension of $Sp(H_\R)$ by $S^1$.
\end{definition}

The main technical result of this subsection is the following:

\begin{proposition}
\label{prop:inclusion-of-extensions}
There is an inclusion $Mp(H_\R) \subset \overline{Sp}(H_\R)$ of central extensions, restricting to the inclusion $\Z/2 \cong \{\pm 1\} \subset S^1$ on fibres.
\end{proposition}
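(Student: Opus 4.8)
The plan is to exhibit the inclusion concretely using the \emph{metaplectic (Weil) representation}, the genuine linear refinement of the Segal--Shale--Weil projective representation over the double cover. Recall (see \cite{LionVergne}) that although the projective representation \eqref{eq:Segal-Shale-Weil-representation-restricted} does not lift to a genuine unitary representation of $Sp(H_\R)$, it does lift to the connected double cover: there is a homomorphism $\mu \colon Mp(H_\R) \to U(W)$ satisfying $\pi \circ \mu = T \circ p$, where $p \colon Mp(H_\R) \to Sp(H_\R)$ is the covering map, $T$ denotes \eqref{eq:Segal-Shale-Weil-representation-restricted} and $\pi \colon U(W) \to PU(W)$ is the quotient.

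First I would unwind the definition of $\overline{Sp}(H_\R)$ as a pullback, namely
\[
\overline{Sp}(H_\R) = \{(g,u) \in Sp(H_\R) \times U(W) \mid \pi(u) = T(g)\},
\]
with fibre $S^1$ sitting as $\{(1,\lambda\cdot\mathrm{id}_W) \mid \lambda \in S^1\}$ and projection $(g,u) \mapsto g$ onto $Sp(H_\R)$. Using the compatibility $\pi \circ \mu = T \circ p$, the assignment $\tilde g \mapsto (p(\tilde g), \mu(\tilde g))$ then defines a homomorphism $Mp(H_\R) \to \overline{Sp}(H_\R)$ covering the identity on $Sp(H_\R)$ through $p$.

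The remaining steps are to check that this homomorphism is an injection of central extensions with the asserted behaviour on fibres. On the kernel $\ker(p) \cong \Z/2$ of the covering, the map takes values in the fibre $S^1$, and since $\mu$ is a homomorphism it restricts there to a group homomorphism $\Z/2 \to S^1$ whose image lies in $\{\pm 1\}$. The key point is that this homomorphism is \emph{nontrivial}: were it trivial, then $\ker(p) \subseteq \ker(\mu)$ and $\mu$ would factor through $Mp(H_\R)/\ker(p) \cong Sp(H_\R)$, thereby providing a genuine linear lift of $T$ along $\pi$ --- contradicting the classical fact that the Segal--Shale--Weil representation is only projective over $Sp(H_\R)$ and requires passing to the double cover. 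Hence the restriction to fibres is exactly the inclusion $\Z/2 \cong \{\pm 1\} \subset S^1$. Injectivity of $Mp(H_\R) \to \overline{Sp}(H_\R)$ then follows formally: an element of its kernel maps under $p$ to $1$, hence lies in $\ker(p) \cong \Z/2$, on which the map is already injective by the nontriviality just established.

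The main obstacle is entirely concentrated in the classical input of the first step --- the existence of the Weil representation as a genuine linear representation of $Mp(H_\R)$ together with the fact that it does not descend to $Sp(H_\R)$; everything afterwards is a routine verification through the pullback description of $\overline{Sp}(H_\R)$. A more self-contained alternative would instead compute the class of the central extension $\overline{Sp}(H_\R)$ directly via the Maslov cocycle underlying $T$ and show that it is pulled back from the order-two class classifying $Mp(H_\R)$, but invoking the Weil representation as above is cleaner and keeps the argument short.
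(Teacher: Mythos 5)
Your proposal is correct, but it takes a genuinely different route from the paper. You invoke the classical existence of the Weil (metaplectic) representation as a genuine unitary representation $\mu \colon Mp(H_\R) \to U(W)$ lifting the Segal--Shale--Weil projective representation, and then the inclusion $Mp(H_\R) \hookrightarrow \overline{Sp}(H_\R)$ falls out of the universal property of the pullback; non-triviality on the fibre $\Z/2$ is exactly the classical fact that the Weil representation does not descend to $Sp(H_\R)$ (equivalently, the non-trivial element of $\ker(Mp(H_\R) \to Sp(H_\R))$ acts by $-\mathrm{id}_W$). The paper instead stays at the level of cocycles: it uses the specific $\Z/8$-valued cocycle $\omega_{Sp}$ of Lion--Vergne to embed a $\Z/8$-extension $Sp_{2g}(\R)^{(8)}$ into $\overline{Sp}_{2g}(\R)$, carves out a double cover inside it using the identity $\omega_{Sp}(g,h)^2 = s(g)^{-1}s(h)^{-1}s(gh)$, and rules out the trivial double cover by pulling the Maslov class $[\tau] = 4c_1$ back to $H^2(\mathfrak{M}(\Sigma);\Z/8) \cong \Z/8$, which forces a preliminary stabilisation to $g \geq 4$. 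Your argument is shorter, works uniformly in $g$ without the stabilisation step, and concentrates all the difficulty in a single well-documented classical input; the paper's argument is more self-contained relative to the explicit formulas in \cite[\S 1.7]{LionVergne} and, as a by-product, sets up the intermediate extension $Sp_{2g}(\R)^{(8)}$ and the class $[\tau]$, both of which are reused in the proof of Proposition \ref{prop:metaplectic-trivial-extension}. The one point you should make explicit is that the non-descent of $\mu$ to $Sp(H_\R)$ is being used for \emph{continuous} lifts, which suffices since a factorisation of the continuous $\mu$ through the quotient by $\ker(p)$ would automatically be continuous; with that noted, your verification is complete.
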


Before proving this, we record its implications under pulling back to the mapping class group. We first define the relevant extensions of the mapping class group and its subgroups.

\begin{definition}
Denote by $\mathfrak{s} \colon \mathfrak{M}(\Sigma) \to Sp(H)$ the standard symplectic action of the mapping class group on $H = H_1(\Sigma;\Z)$. The \emph{metaplectic extension} $\widehat{\mathfrak{M}}(\Sigma)$ of $\mathfrak{M}(\Sigma)$ is defined to be its central extension by $\Z/2$ given by pulling back the $\Z/2$-central extension $Mp(H_\R) \to Sp(H_\R)$ along $\mathfrak{s}$ and the inclusion $Sp(H) \subset Sp(H_\R)$.
\end{definition}

\begin{definition}
For a subgroup $G \subseteq \mathfrak{M}(\Sigma)$, we denote the restrictions of the central extensions $\overline{\mathfrak{M}}(\Sigma)$ and $\widehat{\mathfrak{M}}(\Sigma)$ to $G$ by $\overline{G}$ and $\widehat{G}$ respectively.
\end{definition}

\begin{corollary}
\label{coro:inclusions-of-extensions}
There is an inclusion $\widehat{\mathrm{Mor}}(\Sigma) \subset \overline{\mathrm{Mor}}(\Sigma)$ of central extensions, restricting to the inclusion $\Z/2 \cong \{\pm 1\} \subset S^1$ on fibres.
\end{corollary}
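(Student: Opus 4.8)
The plan is to obtain the desired inclusion by pulling back the inclusion $Mp(H_\R) \subset \overline{Sp}(H_\R)$ of Proposition \ref{prop:inclusion-of-extensions} along a single homomorphism $\mathrm{Mor}(\Sigma) \to Sp(H_\R)$, and then checking that the two resulting pullbacks coincide with $\widehat{\mathrm{Mor}}(\Sigma)$ and $\overline{\mathrm{Mor}}(\Sigma)$. The homomorphism in question is the composite $\mathrm{Mor}(\Sigma) \hookrightarrow \mathfrak{M}(\Sigma) \xrightarrow{\mathfrak{s}} Sp(H) \hookrightarrow Sp(H_\R)$.

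First I would dispose of the metaplectic side, which is essentially immediate. By construction $\widehat{\mathfrak{M}}(\Sigma)$ is the pullback of $Mp(H_\R) \to Sp(H_\R)$ along $\mathfrak{s}$ and the inclusion $Sp(H) \subset Sp(H_\R)$, and restriction of a pullback central extension is the pullback along the restricted map. Hence $\widehat{\mathrm{Mor}}(\Sigma)$ is exactly the pullback of $Mp(H_\R)$ along the composite above.

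The crux is to identify $\overline{\mathrm{Mor}}(\Sigma)$ as the pullback of $\overline{Sp}(H_\R) \to Sp(H_\R)$ along that same composite, and this is where the defining condition of the Earle-Morita subgroup enters. Recall from \eqref{eq:action} that $f_\Heis \colon (k,x)\mapsto (k+\delta_f(x),f_*(x))$ and that $\delta=\mathfrak{d}$. For $f\in\mathrm{Mor}(\Sigma)=\ker(\mathfrak{d})$ we have $\delta_f=0$, so $\Psi(f)=f_\Heis$ has trivial $H^1$-component in the decomposition $\mathrm{Aut}^+(\Heis)\cong Sp(H)\ltimes H^1(\Sigma;\Z)$; that is, $\Psi(f)$ is the image of $\mathfrak{s}(f)\in Sp(H)$ under the symplectic splitting. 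Under the inclusion $\mathrm{Aut}^+(\Heis)\subset\mathrm{Aut}^+(\Heisr)\cong Sp(H_\R)\ltimes H^1(\Sigma;\R)$ this lands in the subgroup $Sp(H_\R)$. Consequently the restriction to $\mathrm{Mor}(\Sigma)$ of the Segal-Shale-Weil representation \eqref{eq:Segal-Shale-Weil-representation-MCG} factors as $\mathrm{Mor}(\Sigma)\xrightarrow{\mathfrak{s}} Sp(H)\hookrightarrow Sp(H_\R)\to PU(W)$, where the final map is the restricted projective representation \eqref{eq:Segal-Shale-Weil-representation-restricted}. Since $\overline{Sp}(H_\R)$ is by definition the pullback of $U(W)\to PU(W)$ along \eqref{eq:Segal-Shale-Weil-representation-restricted}, transitivity of pullbacks identifies $\overline{\mathrm{Mor}}(\Sigma)$ with the pullback of $\overline{Sp}(H_\R)$ along the composite.

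With both extensions now exhibited as pullbacks along the same map, the inclusion of Proposition \ref{prop:inclusion-of-extensions} pulls back to the required inclusion $\widehat{\mathrm{Mor}}(\Sigma)\subset\overline{\mathrm{Mor}}(\Sigma)$, and since pullback of central extensions preserves fibres, it restricts to $\{\pm 1\}\subset S^1$ on fibres, as claimed. I expect the only point needing genuine care to be the factorisation through $Sp(H_\R)$ in the previous paragraph, i.e.\ making precise that the vanishing of $\mathfrak{d}$ on $\mathrm{Mor}(\Sigma)$ kills exactly the translation ($H^1$) part that would otherwise obstruct the Segal-Shale-Weil map from factoring through the symplectic group; everything else is formal bookkeeping with pullbacks of central extensions.
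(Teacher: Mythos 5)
Your argument is correct and is essentially the paper's own proof: the paper likewise identifies both $\widehat{\mathrm{Mor}}(\Sigma)$ and $\overline{\mathrm{Mor}}(\Sigma)$ as pullbacks along the composite $\mathrm{Mor}(\Sigma) \xrightarrow{\mathfrak{s}} Sp(H) \hookrightarrow Sp(H_\R)$, using a commutative diagram whose key square commutes precisely because $\mathfrak{d} \equiv 0$ on $\mathrm{Mor}(\Sigma)$ — the same point you isolate as the crux — and then pulls back the inclusion of Proposition \ref{prop:inclusion-of-extensions}. No gaps.
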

\begin{proof}
By definition, the central extension $\overline{\mathfrak{M}}(\Sigma)$ of $\mathfrak{M}(\Sigma)$ is pulled back from the central extension $U(W)$ of $PU(W)$ along the top row of the following diagram.
\begin{equation*}
\begin{tikzpicture}
[x=1mm,y=1.3mm]
\node (t2) at (30,20) {$\mathrm{Aut}^+(\Heis)$};
\node (t3) at (70,20) {$\mathrm{Aut}^+(\Heisr)$};
\node (t4) at (100,20) {$PU(W)$};
\node (m1) at (-5,10) {$\mathfrak{M}(\Sigma)$};
\node (m2) at (30,10) {$Sp(H) \ltimes H^1(\Sigma;\Z)$};
\node (m3) at (70,10) {$Sp(H_\R) \ltimes H^1(\Sigma;\R)$};
\node (b1) at (-5,0) {$\mathrm{Mor}(\Sigma)$};
\node (b2) at (30,0) {$Sp(H)$};
\node (b3) at (70,0) {$Sp(H_\R)$};
\draw[->] (m1) to node[above left,font=\small]{$\Psi$} (t2.west);
\incl{(t2)}{(t3)}
\draw[->] (t3) to node[above,font=\small]{$T$} (t4);
\draw[->] (m1) to node[below,font=\small]{$(\mathfrak{s},\mathfrak{d})$} (m2);
\incl{(m2)}{(m3)}
\incl{(b1)}{(m1)}
\draw[->] (b1) to node[above,font=\small]{$\mathfrak{s}$} (b2);
\incl{(b2)}{(m2)}
\incl{(b3)}{(m3)}
\incl{(b2)}{(b3)}
\node at (30,15) {\rotatebox{90}{$\cong$}};
\node at (67.5,15) {\rotatebox{90}{$\cong$}};
\end{tikzpicture}
\end{equation*}
Since this diagram commutes (for the bottom-left square this is because $\mathfrak{d} \equiv 0$ on $\mathrm{Mor}(\Sigma)$), it follows that its restriction $\overline{\mathrm{Mor}}(\Sigma)$ to the Earle-Morita subgroup is the pullback of the central extension $\overline{Sp}(H_\R)$ of $Sp(H_\R)$ along the bottom row of the diagram. On the other hand, the metaplectic extension $\widehat{\mathrm{Mor}}(\Sigma)$ is by definition the pullback of $Mp(H_\R)$ along the bottom row of the diagram. Thus the inclusion of central extensions $Mp(H_\R) \subset \overline{Sp}(H_\R)$ of $Sp(H_\R)$ from Proposition \ref{prop:inclusion-of-extensions} pulls back to the desired inclusion $\widehat{\mathrm{Mor}}(\Sigma) \subset \overline{\mathrm{Mor}}(\Sigma)$ of central extensions of $\mathrm{Mor}(\Sigma)$.
\end{proof}

\begin{remark}
The argument above does \emph{not} show that the metaplectic extension includes into the $S^1$-extension pulled back via the Segal-Shale-Weil representation on the \emph{full} mapping class group. This is because the inclusion of central extensions essentially arises at the level of the symplectic group (Proposition \ref{prop:inclusion-of-extensions}), so we have to restrict to the kernel of $\mathfrak{d}$ to ensure that the $S^1$-extension pulls back via the symplectic group.
\end{remark}

\begin{proof}[Proof of Proposition \ref{prop:inclusion-of-extensions}]
Let us first slightly rewrite the statement in notation that makes the dependence on $g$ explicit: our goal is to prove that, over the group $Sp_{2g}(\R)$, there is an embedding of central extensions $Mp_{2g}(\R) \hookrightarrow \overline{Sp}_{2g}(\R)$ (which must necessarily restrict to the inclusion $\Z/2 \cong \{\pm 1\} \subset S^1$ on fibres).

We first show that it suffices to prove this statement for all $g$ sufficiently large; we will then be able to assume for the rest of the proof that $g\geq 4$, which is the stable range for (co)homology of degree at most $2$ for $Sp_{2g}(\R)$ and $\mathfrak{M}(\Sigma_{g,1})$. For any $g<h$ there is an inclusion map $Sp_{2g}(\R) \hookrightarrow Sp_{2h}(\R)$ given by extending symplectic automorphisms of $\R^{2g}$ by the identity on $\R^{2h-2g}$. We claim that the pullbacks of $Mp_{2h}(\R)$ and of $\overline{Sp}_{2h}(\R)$ under this inclusion are $Mp_{2g}(\R)$ and $\overline{Sp}_{2g}(\R)$ respectively. For the metaplectic central extensions this follows from the fact that the induced map $\pi_1(Sp_{2g}(\R)) \cong \Z \to \Z \cong \pi_1(Sp_{2h}(\R))$ is an isomorphism and the metaplectic double covering corresponds to the unique index-$2$ subgroup of $\pi_1$. For $\overline{Sp}$, note that the Segal-Shale-Weil projective representations in genus $g$ and $h$ fit into a commutative square as follows:
\begin{equation}
\label{eq:Shale-Weil-representation-stabilisation}
\begin{tikzcd}
Sp_{2g}(\R) \ar[rrr,"\eqref{eq:Segal-Shale-Weil-representation-restricted}"] \ar[d,hook] &&& PU(L^2(\R^g)) \ar[d] & U(L^2(\R^g)) \ar[l,two heads] \ar[d] \\
Sp_{2h}(\R) \ar[rrr,"\eqref{eq:Segal-Shale-Weil-representation-restricted}"] &&& PU(L^2(\R^h)) & U(L^2(\R^h)) \ar[l,two heads]
\end{tikzcd}
\end{equation}
The right-hand side square of this diagram arises as follows. We consider $L^2(\R^g)$ as the closed subspace of $L^2(\R^h)$ consisting of those $L^2$-functions that factor through $\R^h = \R^g \times \R^{h-g} \twoheadrightarrow \R^g$. Any closed subspace of a Hilbert space has an orthogonal complement, so we may extend unitary automorphisms of $L^2(\R^g)$ by the identity on this complement to obtain a homomorphism $U(L^2(\R^g)) \to U(L^2(\R^h))$, which descends to the projective unitary groups, forming a pullback square. By definition, $\overline{Sp}_{2g}(\R)$ is the pullback along \eqref{eq:Segal-Shale-Weil-representation-restricted} of the extension $U(L^2(\R^g))$ of $PU(L^2(\R^g))$. Commutativity of \eqref{eq:Shale-Weil-representation-stabilisation} then implies that the pullback of $\overline{Sp}_{2h}(\R)$ along the inclusion is $\overline{Sp}_{2g}(\R)$. Thus the existence of an embedding $Mp_{2h}(\R) \hookrightarrow \overline{Sp}_{2h}(\R)$ will imply the existence of an embedding $Mp_{2g}(\R) \hookrightarrow \overline{Sp}_{2g}(\R)$ for $g<h$. We henceforth assume that $g\geq 4$ in this proof (this is only needed in the last paragraph).

First, we recall from \cite[\S 1.7]{LionVergne} that a particular choice of cocycle
\[
\omega_{Sp} \colon Sp_{2g}(\R) \times Sp_{2g}(\R) \longrightarrow S^1 ,
\]
representing the central extension $\overline{Sp}_{2g}(\R)$, takes values in the finite cyclic subgroup $\Z/8 \subseteq S^1$, so there is an embedding of central extensions $Sp_{2g}(\R)^{(8)} \hookrightarrow \overline{Sp}_{2g}(\R)$, for a certain $\Z/8$-central extension $Sp_{2g}(\R)^{(8)}$ of $Sp_{2g}(\R)$. Moreover, this central extension is classified by the element $-[\tau].8\Z \in H^2(Sp_{2g}(\R);\Z/8)$, the reduction modulo $8$ of the element $-[\tau] \in H^2(Sp_{2g}(\R);\Z)$ represented by the negative of the \emph{Maslov cocycle} $\tau$ (see formula 1.7.7 on page 70 of \cite{LionVergne}).

Second, we also recall from \cite[\S 1.7]{LionVergne} that there is a function
\[
s \colon Sp_{2g}(\R) \longrightarrow \Z/4 \subseteq S^1
\]
such that $\omega_{Sp}(g,h)^2 = s(g)^{-1}s(h)^{-1}s(gh)$ (formula 1.7.8 on page 70 of \cite{LionVergne}). It follows that the subset of $Sp_{2g}(\R)^{(8)}$ of those pairs $(t,g)$ for which $t^2 = s(g)$ is a subgroup. The projection onto $Sp_{2g}(\R)$ restricted to this subgroup is a double covering, and so this subgroup must either be the trivial covering $Sp_{2g}(\R) \times \Z/2$ or the metaplectic covering $Mp_{2g}(\R)$.

To finish the proof, we just have to show that it cannot be the trivial covering. Suppose for a contradiction that it is. Then $Sp_{2g}(\R)^{(8)}$ admits a section, so it is a trivial extension and we must have $[\tau].8\Z = 0 \in H^2(Sp_{2g}(\R);\Z/8)$. However, the pullback of $[\tau]$ along the projection $\mathfrak{M}(\Sigma) \to Sp_{2g}(\R)$, also denoted by $[\tau]$, is precisely $4$ times a generator of $H^2(\mathfrak{M}(\Sigma);\Z) \cong \Z$ (here we are using the assumption that $g\geq 4$). Thus $[\tau].8\Z \in H^2(\mathfrak{M}(\Sigma);\Z/8) \cong \Z/8$ is non-zero. Hence we must have $[\tau].8\Z \neq 0$ already in $H^2(Sp_{2g}(\R);\Z/8)$. This completes the proof.
\end{proof}

\subsection{Triviality of an extension.}

The last ingredient that we will need is the following.

\begin{proposition}
\label{prop:metaplectic-trivial-extension}
The $\Z/2$-central extension $\widehat{\mathrm{Mor}}(\Sigma)$ of $\mathrm{Mor}(\Sigma)$ is trivial.
\end{proposition}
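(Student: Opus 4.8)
The plan is to mimic the two-step structure of Lemma \ref{lem:trivial-extension-of-Torelli}: reduce to large genus, then compute a single obstruction class. First I would check that the Earle–Morita subgroups are compatible with stabilisation. The bottom horizontal map in Lemma \ref{lem:Morita-crossed-hom-stabilisation} is injective, so if $f \in \mathrm{Mor}(\Sigma_{g,1}) = \ker(\mathfrak{d})$ then $\mathfrak{d}(\iota(f)) = 0$, whence the inclusion \eqref{eq:inclusion-of-MCG} restricts to an inclusion $\mathrm{Mor}(\Sigma_{g,1}) \hookrightarrow \mathrm{Mor}(\Sigma_{h,1})$. Because the metaplectic extension stabilises (as shown for $Mp$ in the proof of Proposition \ref{prop:inclusion-of-extensions}), the extension $\widehat{\mathrm{Mor}}(\Sigma_{g,1})$ is the pullback of $\widehat{\mathrm{Mor}}(\Sigma_{h,1})$ along this inclusion; since pullbacks of trivial extensions are trivial, it suffices to prove the statement for $g \geq 4$.

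So assume $g \geq 4$. Then $\mathfrak{M}(\Sigma)$ is perfect with $H_2(\mathfrak{M}(\Sigma);\Z) \cong \Z$, so by universal coefficients $H^2(\mathfrak{M}(\Sigma);\Z/2) \cong \Z/2$, its nonzero element being the mod-$2$ reduction of a generator of $H^2(\mathfrak{M}(\Sigma);\Z) \cong \Z$. By construction $\widehat{\mathfrak{M}}(\Sigma) = \mathfrak{s}^*(Mp(H_\R))$ is classified by a class $\hat m \in H^2(\mathfrak{M}(\Sigma);\Z/2)$, and $\widehat{\mathrm{Mor}}(\Sigma)$ is classified by its restriction $\iota^*\hat m$ along $\iota \colon \mathrm{Mor}(\Sigma) \hookrightarrow \mathfrak{M}(\Sigma)$. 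The metaplectic class on $Sp(H)$ is the reduction of a generator of $H^2(Sp(H);\Z)$, hence nonzero; and since $\mathfrak{s}$ induces an isomorphism on $H^2(-;\Z)$ for $g \geq 3$, it also does so on $H^2(-;\Z/2)$, so $\hat m$ is the nonzero element of $H^2(\mathfrak{M}(\Sigma);\Z/2)$. It therefore remains to prove that $\iota^*\hat m = 0$.

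For this I would represent the generator, modulo $2$, by a cocycle that dies on $\mathrm{Mor}(\Sigma)$. The natural choice is Morita's integral $2$-cocycle $c(f,f') = \mathfrak{d}(f^{-1})^\sharp . \mathfrak{d}(f')^\sharp$ of \cite{Morita1989II}, already used in the proof of Lemma \ref{lem:trivial-extension-of-Torelli}. Since $\mathfrak{d} \equiv 0$ on $\mathrm{Mor}(\Sigma) = \ker(\mathfrak{d})$, the cochain $c$ restricts to the zero cocycle there, so $\iota^*[c] = 0$ in $H^2(\mathrm{Mor}(\Sigma);\Z)$ and a fortiori $(\iota^*[c]) \bmod 2 = 0$. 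If one knows that $[c] \bmod 2 = \hat m$ — equivalently, since $H^2(\mathfrak{M}(\Sigma);\Z/2)\cong\Z/2$, that $[c]$ is an odd multiple of the generator of $H^2(\mathfrak{M}(\Sigma);\Z)$ — then $\iota^*\hat m = \iota^*([c]\bmod 2) = (\iota^*[c]) \bmod 2 = 0$, and the extension $\widehat{\mathrm{Mor}}(\Sigma)$ splits.

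The crux, and the step I expect to be the main obstacle, is precisely this identification $[c] \bmod 2 = \hat m$: one must pin down the class of Morita's cocycle modulo $2$ and match it with the metaplectic class. I would extract this from Morita's analysis \cite{Morita1989II} of the cocycle $c$, showing that it represents an odd multiple of the generator of $H^2(\mathfrak{M}(\Sigma);\Z)$; a useful consistency check is the normalisation $[\tau] = 4 \cdot (\text{generator})$ for the Maslov cocycle recorded in the proof of Proposition \ref{prop:inclusion-of-extensions}, which confirms that the signature-type class is an even multiple and so cannot be confused with $[c]$. Once this comparison is secured, the stabilisation reduction and the cohomology computation above are routine.
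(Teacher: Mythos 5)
Your reduction to large genus is fine and matches the paper's first step, but the second half of your argument breaks down at exactly the point you identify as the crux: the hoped-for identification $[c] \bmod 2 = \hat m$ is false. Morita's computation \cite{Morita1989II} gives $[c] = 12c_1$ in $H^2(\mathfrak{M}(\Sigma);\Z) \cong \Z\langle c_1\rangle$, equivalently $[c] = 3[\tau]$ with $[\tau] = 4c_1$; so $[c]$ is an \emph{even} multiple of the generator and its mod-$2$ reduction vanishes, whereas the metaplectic class $\hat m$ is the \emph{non-zero} element $c_1 \bmod 2$ of $H^2(\mathfrak{M}(\Sigma);\Z/2) \cong \Z/2$ (the extension $\widehat{\mathfrak{M}}(\Sigma)$ is non-trivial, as the paper records in the remark following Proposition \ref{prop:metaplectic-trivial-extension}). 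Your ``consistency check'' in fact points the opposite way: since $[c] = 3[\tau]$, Morita's class is exactly as even as the Maslov class. Nor can the mod-$2$ strategy be repaired by weakening the claim: the vanishing of the cochain $c$ on $\mathrm{Mor}(\Sigma)$ only yields $12\,\iota^*c_1 = 0$, i.e.\ that $\iota^*c_1$ is $12$-torsion in $H^2(\mathrm{Mor}(\Sigma);\Z)$, and a $4$-torsion integral class can perfectly well have non-zero mod-$2$ reduction, so $\iota^*\hat m = 0$ does not follow.

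The obstruction here is that reducing to $\Z/2$ coefficients destroys the relevant information: $[\tau] \bmod 2 = 4c_1 \bmod 2 = 0$ already on all of $\mathfrak{M}(\Sigma)$, so no cocycle built from $\mathfrak{d}$ can detect $\hat m$ directly in $H^2(-;\Z/2)$. The paper instead works one level up, with $\Z/8$ coefficients, where the factor of $4$ is retained by the inclusion $\Z/2 \cong \{0,4\} \subset \Z/8$: using the embedding $Mp_{2g}(\R) \hookrightarrow Sp_{2g}(\R)^{(8)}$ from the proof of Proposition \ref{prop:inclusion-of-extensions}, it places $\widehat{\mathrm{Mor}}(\Sigma)$ inside a $\Z/8$-extension classified by $-[\tau].8\Z$, notes that $3[\tau] = [c]$ restricts to zero on $\mathrm{Mor}(\Sigma)$ because the cocycle $c$ itself vanishes there, and then divides by $3$ --- a unit in $\Z/8$ --- to conclude $[\tau].8\Z = 0$ on $\mathrm{Mor}(\Sigma)$. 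Triviality of the ambient $\Z/8$-extension then forces triviality of the metaplectic sub-extension. The essential idea you are missing is thus not a sharper computation of $[c]$, but the passage through the larger extension $Sp_{2g}(\R)^{(8)}$, which is what allows the relation $3[\tau] = [c]$ to be exploited at all.
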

\begin{proof}
We first note that it suffices to prove this statement for all sufficiently large $g$. This is because the inclusion of mapping class groups $\mathfrak{M}(\Sigma_{g,1}) \hookrightarrow \mathfrak{M}(\Sigma_{h,1})$ restricts to an inclusion of Earle-Morita subgroups $\mathrm{Mor}(\Sigma_{g,1}) \hookrightarrow \mathrm{Mor}(\Sigma_{h,1})$ (as an immediate consequence of Lemma \ref{lem:Morita-crossed-hom-stabilisation}), and the pullback of $\widehat{\mathrm{Mor}}(\Sigma_{h,1})$ along this inclusion is $\widehat{\mathrm{Mor}}(\Sigma_{g,1})$, for any $g<h$. This last statement follows from the fact that the pullback of $Mp_{2h}(\R)$ along $Sp_{2g}(\R) \hookrightarrow Sp_{2h}(\R)$ is $Mp_{2g}(\R)$, which was explained during the proof of Proposition \ref{prop:inclusion-of-extensions}. We now assume that $g\geq 4$ for the rest of the proof.

Recall from the proof of Proposition \ref{prop:inclusion-of-extensions} that there is an embedding of central extensions $Mp_{2g}(\R) \hookrightarrow Sp_{2g}(\R)^{(8)}$, where $Sp_{2g}(\R)^{(8)}$ is a certain central extension of $Sp_{2g}(\R)$ by $\Z/8$. Pulling back along the symplectic action $\mathfrak{M}(\Sigma) \to Sp_{2g}(\R)$, we obtain an embedding of central extensions $\widehat{\mathfrak{M}}(\Sigma) \hookrightarrow \mathfrak{M}(\Sigma)^{(8)}$, where $\mathfrak{M}(\Sigma)^{(8)}$ is classified by $-[\tau].8\Z \in H^2(\mathfrak{M}(\Sigma);\Z/8) \cong \Z/8$. Now, $H^2(\mathfrak{M}(\Sigma);\Z)$ is infinite cyclic, generated by the first Chern class $c_1$, and we have $[\tau] = 4c_1$. There is also a cocycle $c \colon \mathfrak{M}(\Sigma) \times \mathfrak{M}(\Sigma) \to \Z$ defined by Morita~\cite{Morita1989II} given by the formula $c(f,f') = \mathfrak{d}(f^{-1})^\sharp . \mathfrak{d}(f')^\sharp$ (see also the proof of Lemma \ref{lem:trivial-extension-of-Torelli}) and we have $[c] = 12c_1$ in $H^2(\mathfrak{M}(\Sigma);\Z)$. Thus, in particular, we have $3[\tau] = [c]$. Since $\mathrm{Mor}(\Sigma) = \mathrm{ker}(\mathfrak{d})$, Morita's cocycle $c$ vanishes on $\mathrm{Mor}(\Sigma)$, and so after restricting to the Earle-Morita subgroup we have $3[\tau] = [c] = 0 \in H^2(\mathrm{Mor}(\Sigma);\Z)$. Reducing modulo $8$ we therefore have $3[\tau].8\Z = 0 \in H^2(\mathrm{Mor}(\Sigma);\Z/8)$. But this cohomology group is a $\Z/8$-module, and $3$ is invertible modulo $8$, so we may divide by $3$ and deduce that $[\tau].8\Z = 0$ in $H^2(\mathrm{Mor}(\Sigma);\Z/8)$. Hence the restriction $\mathrm{Mor}(\Sigma)^{(8)}$ of $\mathfrak{M}(\Sigma)^{(8)}$ to the Earle-Morita subgroup $\mathrm{Mor}(\Sigma)$ is a trivial extension. It therefore follows from the embedding $\widehat{\mathrm{Mor}}(\Sigma) \hookrightarrow \mathrm{Mor}(\Sigma)^{(8)}$ that $\widehat{\mathrm{Mor}}(\Sigma)$ is also a trivial extension.
\end{proof}

\begin{remark}
In summary, we have considered, in this subsection and the previous one, three nested central extensions $Mp(H_\R) \subset Sp(H_\R)^{(8)} \subset \overline{Sp}(H_\R)$ of the symplectic group $Sp(H_\R)$ with fibres $\Z/2 \subset \Z/8 \subset S^1$. Clearly they are either all trivial or all non-trivial. We have seen that their pullbacks along the symplectic action $\mathfrak{M}(\Sigma) \to Sp(H_\R)$ are non-trivial (and hence they must also be non-trivial to begin with), but their further pullbacks (restrictions) to the Earle-Morita subgroup $\mathrm{Mor}(\Sigma) \subset \mathfrak{M}(\Sigma)$ are trivial.
\end{remark}

\subsection{Untwisted representations of Earle-Morita subgroups.}
\label{subsec:untwisting-on-Earle-Morita}

We may now conclude with the main result of this section:

\begin{theorem}
\label{thm:Morita}
For any $n\geq 2$, there is a representation
\begin{equation}
\label{eq:Morita-representation-from-Schroedinger}
\mathrm{Mor}(\Sigma) \longrightarrow GL(\mathcal{V}_n(W))
\end{equation}
induced by the natural action of the mapping class group on the twisted Borel-Moore homology group \eqref{eq:Borel-Moore-homology-group} with coefficients in the Schrödinger representation $V=W$.
\end{theorem}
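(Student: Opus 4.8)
The plan is to build the representation of $\mathrm{Mor}(\Sigma)$ by descending the linear representation of Theorem \ref{thm:Schroedinger} through the tower of central extensions assembled in \S\ref{subsec:metaplectic}. I would start from the homomorphism \eqref{eq:MCG-representation-from-Schroedinger}, namely $\overline{\mathfrak{M}}(\Sigma) \to GL(\mathcal{V}_n(W))$, and restrict it along the inclusion $\overline{\mathrm{Mor}}(\Sigma) \subseteq \overline{\mathfrak{M}}(\Sigma)$ to obtain a linear representation of the $S^1$-central extension $\overline{\mathrm{Mor}}(\Sigma)$ of the Earle-Morita subgroup.

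The next step is to observe, via Corollary \ref{coro:inclusions-of-extensions}, that the $\Z/2$-metaplectic extension $\widehat{\mathrm{Mor}}(\Sigma)$ sits inside $\overline{\mathrm{Mor}}(\Sigma)$ as a subgroup, compatibly with the inclusion $\{\pm 1\} \subset S^1$ on fibres. Restricting the representation of $\overline{\mathrm{Mor}}(\Sigma)$ along this inclusion yields a linear representation $\widehat{\mathrm{Mor}}(\Sigma) \to GL(\mathcal{V}_n(W))$. Finally, Proposition \ref{prop:metaplectic-trivial-extension} tells us that $\widehat{\mathrm{Mor}}(\Sigma)$ is a trivial extension, so the projection $\widehat{\mathrm{Mor}}(\Sigma) \to \mathrm{Mor}(\Sigma)$ admits a homomorphic section $s$. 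Pre-composing with $s$ produces the composite
\[
\mathrm{Mor}(\Sigma) \xrightarrow{\,s\,} \widehat{\mathrm{Mor}}(\Sigma) \hookrightarrow \overline{\mathrm{Mor}}(\Sigma) \hookrightarrow \overline{\mathfrak{M}}(\Sigma) \longrightarrow GL(\mathcal{V}_n(W)),
\]
where the last arrow is \eqref{eq:MCG-representation-from-Schroedinger}; this is the desired representation \eqref{eq:Morita-representation-from-Schroedinger}.

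The substantive difficulties have been pushed into the supporting results: the key input is Proposition \ref{prop:inclusion-of-extensions} (that the Segal-Shale-Weil $S^1$-extension of $Sp(H_\R)$ contains the metaplectic $\Z/2$-extension), which feeds into Corollary \ref{coro:inclusions-of-extensions}, together with the cohomological vanishing $3[\tau]=[c]=0$ on $\mathrm{Mor}(\Sigma)$ underlying Proposition \ref{prop:metaplectic-trivial-extension}. Given these, the proof itself is a short composition. The only point I would take care over is to confirm that each restriction in the chain above is legitimate --- that is, that the inclusions of central extensions are genuinely inclusions of \emph{groups} covering the subgroup inclusions inside $\mathfrak{M}(\Sigma)$ --- so that the composite is a group homomorphism. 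I would also note explicitly that the resulting representation depends on the choice of section $s$, but that any such section yields a valid (if non-canonical) linear representation, which is all that is claimed.
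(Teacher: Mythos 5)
Your proposal is correct and follows exactly the paper's own argument: restrict \eqref{eq:MCG-representation-from-Schroedinger} along $\widehat{\mathrm{Mor}}(\Sigma) \subset \overline{\mathrm{Mor}}(\Sigma) \subset \overline{\mathfrak{M}}(\Sigma)$ using Corollary \ref{coro:inclusions-of-extensions}, then pre-compose with a section supplied by Proposition \ref{prop:metaplectic-trivial-extension}. Your remarks on verifying the inclusions of extensions and on the dependence on the choice of section are sensible but do not change the substance.
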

\begin{proof}
By Theorem \ref{thm:Schroedinger}, such a representation is defined on the central extension $\overline{\mathfrak{M}}(\Sigma)$ of the mapping class group by $S^1$. By Corollary \ref{coro:inclusions-of-extensions}, the restriction of this extension to $\mathrm{Mor}(\Sigma) \subset \mathfrak{M}(\Sigma)$ contains the metaplectic extension $\widehat{\mathrm{Mor}}(\Sigma)$, so we may further restrict to this subgroup. By Proposition \ref{prop:metaplectic-trivial-extension}, the central extension $\widehat{\mathrm{Mor}}(\Sigma)$ of $\mathrm{Mor}(\Sigma)$ is trivial, i.e., it admits a section. Hence, composing with any such section, we obtain the desired representation \eqref{eq:Morita-representation-from-Schroedinger}.
\end{proof}

\bibliographystyle{amsalpha}
\bibliography{biblio}

\end{document}